\newtheorem{theorem}{\bf Theorem}
\newtheorem{claim}{\bf Claim}
\newtheorem{lemma}{\bf Lemma}
\newtheorem{conjecture}{\bf Conjecture}
\newcommand{\bx}{X}
\newcommand{\by}{Y}
\newcommand{\bz}{Z}
\newcommand{\bv}{V}
\newcommand{\bw}{W}
\title{\LARGE \bf
Optimal Sensor Design and Zero-Delay Source Coding for Continuous-Time Vector Gauss-Markov Processes
}
\author{Takashi Tanaka$^{1}$ \and Mikael Skoglund$^{2}$ \and Valeri Ugrinovskii$^{3}$
\thanks{
$^{1}$Department of Aerospace Engineering \& Engineering Mechanics, University of Texas at Austin, Austin, TX, USA. {\tt\small ttanaka@utexas.edu}. 
$^{2}$School of Electrical Engineering, KTH Royal Institute of Technology, Stockholm, Sweden. {\tt\small skoglund@kth.se}. 
$^{3}$School of Engineering and IT, The University of New South Wales at ADFA, Canberra, Australia. {\tt\small v.ougrinovski@adfa.edu.au}.}
}
\begin{document}

\maketitle
\thispagestyle{empty}
\pagestyle{empty}

\begin{abstract}
  We consider the situation in which a continuous-time vector
  Gauss--Markov process is observed through a vector Gaussian channel
  (sensor) and estimated by the Kalman--Bucy filter.  Unlike in
  standard filtering problems where a sensor model is given a
  priori, we are concerned with the optimal sensor design by which (i)
  the mutual information between the source random process and the
  reproduction (estimation) process is minimized, and (ii) the minimum
  mean-square estimation error meets a given distortion constraint.
  We show that such a sensor design problem is tractable by
  semidefinite programming. The connection to zero-delay source-coding
  is also discussed.
\end{abstract}

\section{Introduction}

In this paper, we consider a situation in which a continuous-time
vector Gauss--Markov process (the \emph{source} random process) is
estimated by the Kalman--Bucy filter based on the output of a
memoryless vector Gaussian channel (the \emph{sensor}).  We study this
estimation mechanism from the perspectives of (i) the mean-square
error (MSE) between the source and the estimate, and (ii) the mutual
information rate between the source and the estimate.  From
 standard rate--distortion theory, it is intuitively
clear that an accurate sensing mechanism should lead to a small MSE and a
large mutual information, while a noisy sensing mechanism implies a
large MSE and a small mutual information.  In this paper, we make this
intuition explicit by deriving a trade-off curve between these two
metrics by constructing trade-off achieving sensor gain matrices.  In
particular, we show that trade-off achieving sensor gain matrices are
easily computed by semidefinite programming, and consequently the
trade-off curve admits a convenient semidefinite representation.

There is a simple and explicit relationship (often called the \emph{I-MMSE relationship} in the literature) between the mutual information (I) and the minimum mean-square error (MMSE) when a random variable is observed through a Gaussian channel.
Guo et al. \cite{guo2005mutual} showed that the derivative of the mutual information with respect to the channel SNR (signal-to-noise ratio) is equal to half the MMSE.
They also considered causal estimation of random processes through Gaussian channels and provided a remarkably simple connection between causal and non-causal MMSE.
For continuous-time source processes observed through Gaussian channels, Duncan \cite{duncan1970calculation} already derived a relevant result, stating that ``twice the mutual information is merely the integration of the trace of the optimal mean square filtering error.''
Kadota et al. \cite{kadota1971mutual} considered estimation of continuous-time source over Gaussian channel with feedback (the source is causally affected by channel output).
Weissman et al. \cite{weissman2013directed} further studied the cases with feedback, where a fundamental relationship between directed information and MMSE is derived. 

In parallel with the I-MMSE formulas for Gaussian observations, there exists a line of research for random processes observed through Poisson channels. 
Guo et al. \cite{guo2008mutual}  studied a relationship between mutual information and the estimation error, measured by the mean value of the logarithm of the ratio of the channel input plus dark current and its mean estimate.
Remarkably, the same formula as the I-MMSE relationship for Gaussian case is recovered for Poisson cases as well, provided that MMSE is replaced by a suitable loss function for Poisson channels \cite{atar2012mutual}. 
Estimation of continuous-time processes through Poisson channels with feedback is studied by \cite{kabanov1978capacity,weissman2013directed}.
Recently, an overarching theory unifying the I-MMSE relationship for Gaussian channels and the similar relationship for Poisson channels is proposed \cite{jiao2016mutual}.

Applications of the I-MMSE formula can be found in channel coding problems.
Palomar and Verd\'u \cite{palomar2006gradient} extended the result by \cite{guo2005mutual} to vector Gaussian channels, where an explicit formula relating gradients of mutual information with respect to channel parameters and estimate covariance matrices is obtained.
Based on this result, they proposed a gradient ascent algorithm for channel precoder design where input-output mutual information is maximized subject to input power constraints.

In this paper, we apply Duncan's I-MMSE formula for the aforementioned
trade-off study. Our study is motivated by the zero-delay source
coding problem.  Derpich and {\O}stergaard \cite{derpich2012improved}
showed that minimum the data-rate achievable by zero-delay source coding
of a Gaussian source subject to a quadratic distortion
constraint is closely approximated by the zero-delay rate-distortion
function (also called sequential- or non-anticipative rate-distortion
function in the literature).  For Gauss--Markov sources with
mean-square distortion criteria, computation of zero-delay
rate-distortion functions and construction of optimal test channels
are addressed by recent literature
\cite{stavrou2016filtering,tanaka2014semidefinite}.  Stavrou et
al. \cite{stavrou2016filtering} showed that the optimal test channel
can be realized by a memoryless Gaussian channel (sensor) with
feedback and a Kalman filter. Tanaka et
al. \cite{tanaka2014semidefinite} presented a different
realization of the test channel, using a memoryless Gaussian
channel without feedback and a Kalman filter.  The latter observation
implies that the zero-delay rate-distortion function can be computed
by considering the I-MMSE trade-off with respect to the Gaussian
channel gain (sensor gain matrix) \cite{tanaka2014semidefinite}.  The
I-MMSE trade-off in the present paper can be viewed as a
continuous-time counterpart of a similar trade-off considered in
discrete-time \cite{tanaka2014semidefinite}.  From analogous
discrete-time results, it is conjectured that results in this paper
provide fundamental performance limitations of zero-delay source
coding schemes for continuous-time sources, although zero-delay source
coding problems for continuous-time sources are not fully explored in
the literature.

This paper is organized as follows. Problem formulation is presented
in Section~\ref{secprob}. Section~\ref{secmain} summarizes the main
result. A connection to zero-delay source coding problem is discussed
in Section~\ref{secapp}. Section~\ref{seccon} summarizes the paper and
discuss future work.

Notation: Let $(\Omega, \mathcal{F}, \mathcal{P})$ be a probability
space and let $X$ be a random variable in a measurable space
$(\mathcal{X},\mathcal{A})$. The probability distribution $\mu_X$ of $X$
is defined by
\[
\mu_X(A)=\mathcal{P}\{\omega: X(\omega)\in A\}\;\; \forall A \in \mathcal{A}.
\]
If $X$ and $Y$ are random variables in the same measurable space with distributions $\mu_X$ and $\mu_Y$, the relative entropy from $Y$ to $X$ is defined by 
\[
D(\mu_X \| \mu_Y)=\int \log \frac{d\mu_X}{d \mu_Y} d\mu_X
\]
if the Radon-Nikodym derivative $\frac{d\mu_X}{d \mu_Y}$ exists.
If random variables $X$ and $Y$ have a joint probability distribution $\mu_{XY}$, the mutual information between $X$ and $Y$ is defined by
\[
I(X;Y)=D(\mu_{XY} \| \mu_X \otimes \mu_Y)
\]
where $\mu_X \otimes \mu_Y$ is the product measure defined by the
marginal  distributions. If $\mu_X$ is discrete,
the entropy of $X$ is defined by
\[
H(X)=-\sum_{x\in \mathcal{X}} \mu_X(x) \log \mu_X(x).
\]

\section{Problem Formulation}
\label{secprob}

Let $(\Omega, \mathcal{F}, \mathcal{P})$ be a complete probability space and $\mathcal{F}_t\subset \mathcal{F}$ be a non-decreasing family of $\sigma$-algebras. 
Let $(\bw_t, \mathcal{F}_t)$ and $(\bv_t, \mathcal{F}_t)$ be $n$-dimensional independent standard Wiener processes with respect to $\mathcal{P}$.
Assume that the source random process is an $n$-dimensional Gauss--Markov process of the form
\begin{equation}
\label{eqprocess}
d\bx_t=A\bx_tdt+B d\bw_t, \;\; t\in [0, \infty)
\end{equation}
with $\bx_0=0$. The source process is observed through an $n$-dimensional Gaussian channel (or sensor):
\begin{equation}
\label{eqobs}
d\by_t=C\bx_tdt+ d\bv_t, \;\; t\in [0, \infty)
\end{equation}
with $\by_0=0$. We assume that $(A,B)$ is a controllable pair.
\subsection{Minimum mean-square error (MMSE) estimate}
Let $\mathcal{F}_t^{\by} \subset \mathcal{F}$ be the  $\sigma$-algebra generated by $\by_s, 0\leq s\leq t$. 
Denote by $\hat{\bx}_t\triangleq \mathbb{E}(\bx_t|\mathcal{F}_t^{\by})$ the causal MMSE estimate of the process \eqref{eqprocess} via the observation \eqref{eqobs}, calculated by the Kalman--Bucy filter
\begin{equation}
\label{eqkbfilter}
d\hat{\bx}_t=A\hat{\bx}_tdt+P_tC^\top (d\by_t-C\hat{\bx}_tdt), \;\; t\in [0, \infty)
\end{equation}
with $\hat{\bx}_0=0$. In \eqref{eqkbfilter}, $P_t$ is the unique solution to the matrix Riccati differential equation
\begin{equation}
\label{eqriccati}
\frac{dP_t}{dt}=AP_t+P_tA^\top-P_tC^\top CP_t+BB^\top, \;\; t\in [0, \infty)
\end{equation}
with $P_0=0$.

For notational simplicity,  we denote by $\bx_0^T$, $\by_0^T$, $\hat{\bx}_0^T$ the random processes $\bx_t$, $\by_t$, $\hat{\bx}_t$ over the horizon $0\leq t \leq T$ as defined above. The MMSE performance over the considered horizon is denoted by
\[
\rho(\bx_0^T, \hat{\bx}_0^T)\triangleq \int_0^T \mathbb{E} \| \bx_t-\hat{\bx}_t\|^2 dt =\int_0^T \text{Tr} (P_t) \; dt.
\]

\subsection{Mutual information}
We are also interested in the mutual information $I(\bx_0^T;\hat{\bx}_0^T)$ between $\bx_0^T$ and $\hat{\bx}_0^T$. 

\begin{theorem}
\label{theoduncan}
Let the random processes $\bx_0^T$ and $\hat{\bx}_0^T$ be defined as above. Then
\[
I(\bx_0^T; \hat{\bx}_0^T)=\frac{1}{2}\int_0^T \mathbb{E}\|C (\bx_t-\hat{\bx}_t)\|^2 dt.
\]
\vspace{0ex}
\end{theorem}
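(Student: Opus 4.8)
The plan is to prove the identity in two stages: first invoke Duncan's original mutual-information formula for the pair (source, observation), and then show that replacing the raw observation $\by_0^T$ by its causal MMSE estimate $\hat{\bx}_0^T$ destroys no mutual information with the source. For the first stage I would apply the general form of Duncan's theorem \cite{duncan1970calculation} to the channel \eqref{eqobs}. Writing the output as $d\by_t = C\bx_t\,dt + d\bv_t$ with drift $h_t = C\bx_t$, and recalling that its causal least-squares estimate is $\mathbb{E}[h_t\mid\mathcal{F}_t^{\by}] = C\hat{\bx}_t$, Duncan's identity gives
\[
I(\bx_0^T;\by_0^T)=\tfrac12\int_0^T \mathbb{E}\|C\bx_t - C\hat{\bx}_t\|^2\,dt = \tfrac12\int_0^T \mathbb{E}\|C(\bx_t-\hat{\bx}_t)\|^2\,dt,
\]
which is exactly the claimed right-hand side. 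The integrability hypothesis $\mathbb{E}\int_0^T\|C\bx_t\|^2\,dt<\infty$ needed for Duncan's theorem holds because $\bx_t$ is Gaussian with finite second moments, uniformly bounded on the compact interval $[0,T]$.

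It then remains to establish $I(\bx_0^T;\hat{\bx}_0^T)=I(\bx_0^T;\by_0^T)$. One inequality is immediate: since the Kalman--Bucy filter \eqref{eqkbfilter} makes $\hat{\bx}_0^T$ a measurable functional of $\by_0^T$, the chain $\bx_0^T - \by_0^T - \hat{\bx}_0^T$ is Markov and the data-processing inequality yields $I(\bx_0^T;\hat{\bx}_0^T)\le I(\bx_0^T;\by_0^T)$. For the reverse inequality I would argue that $\hat{\bx}_0^T$ is a sufficient statistic, i.e.\ that $\bx_0^T - \hat{\bx}_0^T - \by_0^T$ is \emph{also} a Markov chain, so that the data-processing inequality applied in this direction gives $I(\bx_0^T;\by_0^T)\le I(\bx_0^T;\hat{\bx}_0^T)$ and hence equality. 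Equivalently, by the chain rule $I(\bx_0^T;\by_0^T)=I(\bx_0^T;\hat{\bx}_0^T)+I(\bx_0^T;\by_0^T\mid\hat{\bx}_0^T)$, it suffices to prove the conditional independence $I(\bx_0^T;\by_0^T\mid\hat{\bx}_0^T)=0$.

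This conditional independence is where the main work lies, and I would exploit the jointly Gaussian structure together with the fact that the filtering error covariance $P_t$ in \eqref{eqriccati} is \emph{deterministic}. The idea is to decompose $\bx_0^T = \mathbb{E}[\bx_0^T\mid\mathcal{F}_T^{\by}] + e_0^T$, where the smoothing error $e_0^T$ is Gaussian with a covariance that does not depend on the observation realization and is therefore independent of $\mathcal{F}_T^{\by}$, hence of both $\by_0^T$ and $\hat{\bx}_0^T$. Using a Rauch--Tung--Striebel backward recursion, the smoothed path $\mathbb{E}[\bx_0^T\mid\mathcal{F}_T^{\by}]$ can be written as a deterministic functional of the filtered trajectory $\hat{\bx}_0^T$ and the deterministic gains generated by $P_t$. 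Consequently, conditioned on $\hat{\bx}_0^T$, the source path $\bx_0^T$ becomes a fixed functional of the conditioning variable plus noise independent of $\by_0^T$, giving $\bx_0^T\perp\by_0^T\mid\hat{\bx}_0^T$. The delicate point I expect to require care is the rigorous justification that the filtered trajectory determines the smoother — equivalently, that no information leaks through the initial instant where $P_0=0$ renders the filter gain degenerate — together with the measurability and integrability checks that make all of the path-space mutual informations and the invoked data-processing inequalities well defined.
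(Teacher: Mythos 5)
Your proposal is correct and follows the same two-stage skeleton as the paper --- Duncan's identity plus the fact that the Kalman--Bucy estimate is a lossless (sufficient) statistic --- but it distributes the work differently, and one redistribution deserves a caution. In stage one you apply a ``general form'' of Duncan's theorem to the channel \eqref{eqobs} with drift $h_t = C\bx_t$ and conclude $I(\bx_0^T;\by_0^T)=\tfrac12\int_0^T \mathbb{E}\|C(\bx_t-\hat{\bx}_t)\|^2dt$ in one stroke. The paper instead proves (via Girsanov, Appendix A) the identity \eqref{eqduncan} for the pair \emph{(drift, observation)}, i.e.\ $I(\by_0^T;\bz_0^T)$ with $\bz_t=C\bx_t$, and then devotes a separate lemma (equation \eqref{eqIXYZ}, Appendix B) to showing $I(\by_0^T;\bz_0^T)=I(\by_0^T;\bx_0^T)$ --- a careful two-sided data-processing argument with stochastic kernels on path space, needed precisely because when $C$ is singular the drift $C\bx$ is a non-invertible function of the signal $\bx$. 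Your single citation is legitimate only if the cited theorem is the version whose left-hand side is the mutual information between the \emph{signal} and the observation when the drift is a functional of the signal (e.g.\ Liptser--Shiryaev, Theorem 16.3); Duncan's classical statement concerns the drift process itself, so as written your step silently absorbs exactly the identity that the paper's Appendix B exists to prove. In stage two the roles reverse: the paper merely asserts the sufficiency identity \eqref{eqsskalman} ``due to the property of the Kalman--Bucy filter,'' whereas you supply a genuine argument --- decompose $\bx_0^T$ into the smoothed path plus a Gaussian smoothing error independent of $\mathcal{F}_T^{\by}$, and use the Rauch--Tung--Striebel recursion (driven by the filtered path and the deterministic $P_t$) to exhibit the smoothed path as a measurable functional of $\hat{\bx}_0^T$, yielding the conditional independence $\bx_0^T \perp \by_0^T \mid \hat{\bx}_0^T$. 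That argument is sound in the jointly Gaussian setting (the $t=0$ degeneracy is harmless since $\bx_0=0$ a.s., and $P_t\succ 0$ for $t>0$ by controllability of $(A,B)$), and it substantiates a step the paper leaves unproved; so on balance your proposal is a valid proof, provided you repair the citation in stage one.
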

\begin{proof}
Set $\bz_t=C\bx_t$. The following identity is well-known (e.g., Duncan \cite{duncan1970calculation}):
\begin{equation}
\label{eqduncan}
I(\by_0^T; \bz_0^T)=\frac{1}{2}\int_0^T \mathbb{E}\|C (\bx_t-\hat{\bx}_t)\|^2 dt.
\end{equation}
For completeness, a proof of \eqref{eqduncan} is given in Appendix A.
We also have an identity
\begin{equation}
\label{eqIXYZ}
I(Y_0^T;Z_0^T)= I(Y_0^T;X_0^T),
\end{equation}
whose proof is provided in Appendix B.

Due to the property of the Kalman--Bucy filter, $\hat{\bx}_0^T$ is a sufficient statistic of $\bx_0^T$ for $\by_0^T$. Thus
\begin{equation}
\label{eqsskalman}
I(\bx_0^T;\by_0^T)=I(\bx_0^T;\hat{\bx}_0^T).
\end{equation}
The claim follows from \eqref{eqduncan}--\eqref{eqsskalman}.
\end{proof}
It is immediate from Theorem~\ref{theoduncan} that the mutual information of interest can be written in terms of the solution $P_t$ to the Riccati equation \eqref{eqriccati} as
\[
I(\bx_0^T; \hat{\bx}_0^T)=\frac{1}{2}\int_0^T \text{Tr}(C P_t C^\top) dt.
\]

\subsection{I-MMSE trade-off via observation channel design}
In this paper, we construct the optimal observation gain $C\in \mathbb{R}^{n\times n}$ in the observation channel \eqref{eqobs} that minimizes the average mutual information  while the average MMSE is smaller than a given constant $D$. Formally, we seek  an optimal solution to the problem
\begin{subequations}
\label{optmain}
\begin{align}
R(D)\triangleq \inf_{C\in \mathcal{C}} \;\;\;& \limsup_{T\rightarrow +\infty} \frac{1}{T}I(\bx_0^T; \hat{\bx}_0^T) \\
\text{s.t.}\;\;\;\; & \limsup_{T\rightarrow +\infty}\frac{1}{T} \rho(\bx_0^T, \hat{\bx}_0^T)\leq D.
\end{align}
\end{subequations}
In \eqref{optmain}, the underlying linear system model \eqref{eqprocess} is given. The domain of optimization $\mathcal{C}\subset \mathbb{R}^{n\times n}$ is the set of matrices $C$ such that $(A,C)$ is a detectable pair, i.e., $A+LC$ is Hurwitz stable for some matrix $L$.
Below, we show that there exists an optimal solution and thus ``inf'' can be replaced by ``min.''

\section{Main Result}
\label{secmain}
We first assume that a precoder matrix $C\in\mathcal{C}$ is given.
Since we assume $(A,B)$ is controllable and $(A,C)$ is detectable, the algebraic Riccati equation
\begin{equation}
\label{eqARE}
AP+PA^\top-PC^\top CP+BB^\top=0
\end{equation}
admits a unique positive definite solution $P$ \cite[Theorem 13.7,
Corollary 13.8]{zhou1996robust}. Under the same assumption, the
solution $P_t$ to the Riccati differential equation \eqref{eqriccati}
with $P_0=0$ satisfies $P_t \rightarrow P$ as $t \rightarrow +\infty$
(e.g., \cite[Theorem 10.10]{bitmead1991riccati}), where $P$ is the
unique positive definite solution to \eqref{eqARE}.  Thus, it follows
from the convergence of Ces\`{a}ro mean that
\begin{align*}
\frac{1}{T}I(\by_0^T;\bz_0^T)&=\frac{1}{2T}\int_0^T \text{Tr}(CP_tC^\top)dt \rightarrow \frac{1}{2}\text{Tr}(CPC^\top) \\
\frac{1}{T}\rho(\bx_0^T,\hat{\bx}_0^T)&=\frac{1}{T}\int_0^T \text{Tr}(P_t)dt \rightarrow \text{Tr}(P) \;\;\text{ as }T\rightarrow +\infty.
\end{align*}
Hence, the right hand side of \eqref{optmain} can be written as
\begin{subequations}
\label{optCP}
\begin{align}
\inf_{C\in\mathcal{C}, P\succ 0} \;\; &\frac{1}{2}\text{Tr}(CPC^\top) \\
\text{s.t.} \;\;\;\;\;\; & AP+PA^\top-PC^\top CP+BB^\top=0  \label{eqricconst}\\
& \text{Tr}(P)\leq D.
\end{align}
\end{subequations}

Now we show that the optimization problem \eqref{optCP} is reformulated as a semidefinite programming problem.
First, under the equality constraint \eqref{eqricconst}, the objective function can be written as
\begin{subequations}
\label{eqstepA}
\begin{align}
\frac{1}{2}\text{Tr}(CPC^\top)&=\frac{1}{2}\text{Tr}(PC^\top C P P^{-1}) \label{eqstepA1}\\
&= \text{Tr}(A) +\frac{1}{2}\text{Tr}(B^\top P^{-1} B) \label{eqstepA2}\\
&= \min_{Q} \;\;\; \text{Tr}(A) +\frac{1}{2}\text{Tr}(Q) \label{eqstepA3}\\
& \hspace{3.5ex} \text{s.t.} \;\;\; B^\top P^{-1} B \preceq Q \nonumber \\
&= \min_{Q} \;\;\; \text{Tr}(A) +\frac{1}{2}\text{Tr}(Q) \label{eqstepA4}\\
& \hspace{3.5ex} \text{s.t.} \;\;\; \left[\begin{array}{cc}
Q & B^\top \!\!\\
B &\!\! P
\end{array}\right]\succeq 0. \nonumber
\end{align}
\end{subequations}
The equality constraint \eqref{eqricconst} is used to obtain \eqref{eqstepA2} from \eqref{eqstepA1}.
Equality \eqref{eqstepA3} holds since the unique solution to the minimization problem in \eqref{eqstepA3}  is $Q=B^\top P^{-1}B$. 
We have applied the Schur complement formula in \eqref{eqstepA4}.

The next lemma allows us to replace the nonlinear equality constraint \eqref{eqricconst} with a linear inequality constraint.
\begin{lemma}
\label{lemequiv}
If $(A,B)$ is controllable, then the following conditions are equivalent.
\begin{itemize}[leftmargin=3ex]
\item[(i)] $\exists C\in\mathcal{C}, P\succ 0 \text{ s.t. } AP+PA^\top-PC^\top CP+BB^\top=0.$
\item[(ii)] $\exists P\succ 0 \text{ s.t. } AP+PA^\top+BB^\top \succeq 0.$
\end{itemize}
\end{lemma}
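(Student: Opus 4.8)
The plan is to prove the two implications separately, treating (ii)$\Rightarrow$(i) as the substantive direction. The implication (i)$\Rightarrow$(ii) is immediate and does not even use controllability: given $C\in\mathcal{C}$ and $P\succ0$ satisfying \eqref{eqricconst}, I would simply rearrange the equation as $AP+PA^\top+BB^\top=PC^\top CP$ and observe that $PC^\top CP=(CP)^\top(CP)\succeq0$, so (ii) holds with the same $P$.

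For (ii)$\Rightarrow$(i), suppose $P\succ0$ satisfies $M:=AP+PA^\top+BB^\top\succeq0$. The first step is to recover a candidate gain $C$. Since $P^{-1}MP^{-1}$ is congruent to $M$, it is real symmetric positive semidefinite, so I would let $C=(P^{-1}MP^{-1})^{1/2}$ be its symmetric square root. Then $C^\top C=P^{-1}MP^{-1}$, hence $PC^\top CP=M$, and the algebraic Riccati equation \eqref{eqricconst} holds for this pair $(C,P)$. What remains, and what I expect to be the main obstacle, is to verify that this $C$ lies in $\mathcal{C}$, i.e. that $(A,C)$ is detectable; this is the only place where controllability of $(A,B)$ must be invoked.

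To establish detectability I would use the Popov--Belevitch--Hautus (PBH) test and argue by contradiction. Suppose some $v\neq0$ satisfies $Av=\lambda v$ with $\mathrm{Re}(\lambda)\geq0$ and $Cv=0$. Multiplying $Cv=0$ by $C^\top$ gives $P^{-1}MP^{-1}v=0$, so the vector $u:=P^{-1}v\neq0$ satisfies $Mu=0$. Left-multiplying $Mu=0$ by $u^*$ and using the relation $u^*PA^\top u=\bar{\lambda}\,u^*Pu$ (obtained by conjugate-transposing $APu=\lambda Pu$), the scalar identity $u^*Mu=0$ collapses to $2\,\mathrm{Re}(\lambda)\,u^*Pu+\|B^\top u\|^2=0$. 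Since $P\succ0$ forces $u^*Pu>0$ while $\mathrm{Re}(\lambda)\geq0$, both nonnegative terms must vanish, yielding $\mathrm{Re}(\lambda)=0$ and $B^\top u=0$. Feeding $B^\top u=0$ back into $Mu=0$ leaves $APu+PA^\top u=0$, i.e. $PA^\top u=-\lambda Pu$, whence $A^\top u=-\lambda u$. Thus $u$ is a left eigenvector of $A$ with a purely imaginary eigenvalue and $u^*B=0$, which contradicts controllability of $(A,B)$ via PBH. Therefore no such $v$ exists, $(A,C)$ is detectable, and (i) follows.

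The delicate points I would watch are the complex arithmetic in the eigenvector manipulation, since $\lambda$, $u$, and $v$ may be complex while $A$, $B$, $C$, $P$ are real, and the precise book-keeping that turns $u^*Mu=0$ into the sum-of-nonnegative-terms identity. Everything else -- existence of the symmetric square root and the PBH characterizations of controllability and detectability -- is routine linear algebra.
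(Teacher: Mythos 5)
Your proof is correct, and its skeleton matches the paper's: the easy direction is handled identically, and the substantive direction combines a square-root construction of $C$ with an eigenvector/quadratic-form argument that ends in a contradiction with the PBH test for controllability of $(A,B)$. The difference is where the eigenvector lives. You apply the PBH detectability test directly to the pair $(A,C)$: from $Av=\lambda v$, $\mathrm{Re}(\lambda)\geq 0$, $Cv=0$ you pass to $u=P^{-1}v$ with $Mu=0$, and extract $\mathrm{Re}(\lambda)=0$, $B^\top u=0$, $A^\top u=-\lambda u$. The paper instead proves the stronger statement that the closed-loop matrix $A-PC^\top C$ is Hurwitz: it rewrites the Riccati equation as $(A-PC^\top C)P+P(A-PC^\top C)^\top+PC^\top CP+BB^\top=0$, takes an unstable eigenvector $x$ of $(A-PC^\top C)^\top$, and from the quadratic form obtains $CPx=0$ and $B^\top x=0$, hence $A^\top x=\lambda x$, the same PBH contradiction. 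Your route is slightly more elementary (no closed-loop rewriting, and you make explicit the choice $C=(P^{-1}MP^{-1})^{1/2}$ where the paper only asserts that some $C$ ``clearly'' exists); the paper's route buys a stronger byproduct, namely an explicit stabilizing output injection $L=-PC^\top$, i.e.\ stability of the Kalman--Bucy error dynamics. One further remark: the paper's argument is stated for \emph{every} $C$ solving the Riccati equation, which is what Theorem~\ref{theosdr} later invokes (``any matrix $C\in\mathcal{C}$ satisfying the equation is optimal''); although you phrase your detectability check for the particular symmetric square root, it in fact only uses the identity $C^\top C=P^{-1}MP^{-1}$, which holds for every solution $C$, so your argument covers the general case as well.
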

\begin{proof}
The direction (i)$\Rightarrow$(ii) is trivial. To show (ii)$\Rightarrow$(i), notice that if condition (ii) holds, then clearly there exists a matrix $C$
such that 
\begin{equation}
\label{eqric1}
AP+PA^\top-PC^\top CP+BB^\top=0.
\end{equation}
To complete the proof, we show that for every $C$ satisfying \eqref{eqric1}, $(A,C)$ is a detectable pair. It is sufficient to show that $A-PC^\top C$ is stable.
To this end, rewrite \eqref{eqric1} as
\begin{equation}
\label{eqric2}
(A-PC^\top \!C)P+P(A-PC^\top \!C)^\top\!\!+PC^\top \!CP+BB^\top\!\!=\!0
\end{equation}
and suppose that $(A-PC^\top \!C)^\top$ is not stable. Let $\lambda$ be an unstable eigenvalue and $x$ be the corresponding eigenvector:
\begin{equation}
\label{eqevalvec}
(A^\top -C^\top CP)x=\lambda x.
\end{equation}
Pre- and post-multiplying \eqref{eqric2} by $x^*$ and $x$, we have
\[
(\lambda+\bar{\lambda})x^*Px+x^*(PC^\top CP+BB^\top)x=0.
\]
Since $\text{Re}(\lambda)\geq 0$ and $P\succ 0$, this implies $CPx=0$ and $B^\top x=0$. Thus, from \eqref{eqevalvec}, we obtain $A^\top x=\lambda x$ and $B^\top x=0$. This contradicts the Popov-Belevitch-Hautus (PBH) test for controllability of $(A,B)$.
\end{proof}
Applying \eqref{eqstepA} and Lemma~\ref{lemequiv} to \eqref{optCP}, we obtain the following result,
\begin{subequations}
\label{eqsdrinf}
\begin{align}
R(D)=\inf_{P\succ 0, Q} &\;\; \text{Tr}(A) +\frac{1}{2}\text{Tr}(Q) \\
\text{s.t.}\;\; & \; AP+PA^\top+BB^\top \succeq 0 \\
& \left[\begin{array}{cc}
Q & B^\top \!\!\\
B &\!\! P
\end{array}\right]\succeq 0 \\
&\;\; \text{Tr}(P)\leq D.
\end{align}
\end{subequations}
The main result of this paper is given by the next theorem.
\begin{theorem}
\label{theosdr}
Suppose $(A,B)$ is controllable. The optimal value $R(D)$ of \eqref{optmain} admits a semidefinite representation
\begin{align*}
R(D)=\min_{P\succ 0, Q} &\;\; \text{Tr}(A) +\frac{1}{2}\text{Tr}(Q) \\
\text{s.t.}\;\; & \; AP+PA^\top+BB^\top \succeq 0 \\
& \left[\begin{array}{cc}
Q & B^\top \!\!\\
B &\!\! P
\end{array}\right]\succeq 0 \\
&\;\; \text{Tr}(P)\leq D.
\end{align*}
In particular, there exists an optimal solution $P\succ 0$, $Q\succeq 0$ attaining the optimal value. Moreover, any matrix $C\in \mathcal{C}$ satisfying 
\[
AP+PA^\top-PC^\top CP+BB^\top=0,
\]
which always exists, is an optimal solution to \eqref{optmain}.
\end{theorem}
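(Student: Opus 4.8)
The plan is to prove the statement in three stages: show that the infimum in \eqref{eqsdrinf} equals $R(D)$ and is finite, upgrade the infimum to a minimum by exhibiting a minimizer $P^\star\succ 0$, $Q^\star\succeq 0$, and then read off an optimal $C$ for \eqref{optmain} from $P^\star$. The equality of the value of \eqref{eqsdrinf} with that of \eqref{optCP}, hence with $R(D)$, is already delivered by the computation \eqref{eqstepA} together with Lemma~\ref{lemequiv}, so the genuinely new content is attainment and recovery.

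First I would eliminate $Q$. For a fixed feasible $P\succ 0$ the Schur-complement constraint is equivalent to $Q\succeq B^\top P^{-1}B$, so the minimizing choice is $Q=B^\top P^{-1}B$ and the reduced problem is to minimize $f(P)\triangleq \mathrm{Tr}(A)+\tfrac12\mathrm{Tr}(B^\top P^{-1}B)$ over $\mathcal{P}=\{P\succ 0:\ AP+PA^\top+BB^\top\succeq 0,\ \mathrm{Tr}(P)\leq D\}$. Since $B^\top P^{-1}B\succeq 0$ we have $f(P)\geq \mathrm{Tr}(A)$, so the value is bounded below, and assuming $D$ is large enough that $\mathcal{P}\neq\emptyset$ (otherwise $R(D)=+\infty$ and there is nothing to attain) it is finite.

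The crux, and the step I expect to be the main obstacle, is attainment: the domain $P\succ 0$ is open, so I must rule out a minimizing sequence escaping to the boundary of the positive-definite cone. Take $P_k\in\mathcal{P}$ with $f(P_k)\to R(D)$; the bound $\mathrm{Tr}(P_k)\leq D$ together with $P_k\succ 0$ keeps $\{P_k\}$ bounded, so along a subsequence $P_k\to P^\star\succeq 0$, and $P^\star$ inherits the closed constraints $AP^\star+P^\star A^\top+BB^\top\succeq 0$ and $\mathrm{Tr}(P^\star)\leq D$. Suppose $P^\star$ were singular with null vector $v\neq 0$. Applying the Cauchy--Schwarz bound $m^\top P_k^{-1}m\geq (m^\top v)^2/(v^\top P_k v)$ to the columns of $B$ gives $\mathrm{Tr}(B^\top P_k^{-1}B)\geq \|B^\top v\|^2/(v^\top P_k v)$, whose denominator tends to $v^\top P^\star v=0$; hence if $B^\top v\neq 0$ for some $v\in\ker P^\star$ then $f(P_k)\to\infty$, contradicting minimality. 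In the complementary case $\ker P^\star\subseteq\ker B^\top$, evaluating the quadratic form of the positive-semidefinite matrix $AP^\star+P^\star A^\top+BB^\top$ on $v\in\ker P^\star$ yields $0$, so $(AP^\star+P^\star A^\top+BB^\top)v=0$, and using $P^\star v=0$, $B^\top v=0$ this forces $A^\top v\in\ker P^\star$. Thus $\ker P^\star$ is a nonzero $A^\top$-invariant subspace contained in $\ker B^\top$; taking an eigenvector of $A^\top$ inside it contradicts the PBH test for controllability of $(A,B)$, exactly as in the proof of Lemma~\ref{lemequiv}. Either way we reach a contradiction, so $P^\star\succ 0$, $P^\star\in\mathcal{P}$, and by continuity $f(P^\star)=R(D)$; setting $Q^\star=B^\top(P^\star)^{-1}B\succeq 0$ produces the claimed optimal pair, and the infimum is a minimum.

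Finally, for the recovery statement, $P^\star\succ 0$ satisfies condition (ii) of Lemma~\ref{lemequiv}, so the (ii)$\Rightarrow$(i) construction yields a $C\in\mathcal{C}$ with $AP^\star+P^\star A^\top-P^\star C^\top CP^\star+BB^\top=0$, and any such $C$ is detectable by that lemma's proof. The one point needing care is checking that this $C$ actually realizes $P^\star$ as its stationary error covariance: since $(A,B)$ is controllable and $(A,C)$ detectable, the algebraic Riccati equation for $C$ has a unique positive-definite solution, which must be $P^\star$. Consequently the steady-state distortion is $\mathrm{Tr}(P^\star)\leq D$ and, by \eqref{eqstepA}, the information rate is $\tfrac12\mathrm{Tr}(CP^\star C^\top)=\mathrm{Tr}(A)+\tfrac12\mathrm{Tr}(B^\top(P^\star)^{-1}B)=R(D)$, so $C$ is feasible and optimal for \eqref{optmain}, completing the argument.
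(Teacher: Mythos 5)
Your proof is correct, and while it shares the paper's skeleton (reduce to \eqref{eqsdrinf} via \eqref{eqstepA} and Lemma~\ref{lemequiv}, prove attainment by excluding singular $P$, recover $C$ from Lemma~\ref{lemequiv}), the attainment step is organized genuinely differently. The paper relaxes $P\succ 0$ to $P\succeq 0$, gets existence from Weierstrass' theorem (closed feasible set, coercive objective), and then shows any optimizer of the relaxed problem \eqref{eqsdpsucceq} has nonsingular $P$ by a block-coordinate argument: the Schur-complement LMI forces $\mathrm{Im}(B)\subseteq\mathrm{Im}(P)$, controllability forces $A_{21}\neq 0$, and the Lyapunov constraint then exhibits a PSD matrix with a zero diagonal block and nonzero off-diagonal block, which is impossible. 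You never relax: you eliminate $Q$, take a minimizing sequence (precompact by the trace bound), and kill singular limit points either by Cauchy--Schwarz blow-up of the objective (when $\ker P^\star\not\subseteq\ker B^\top$) or by showing $\ker P^\star$ is an $A^\top$-invariant subspace of $\ker B^\top$ and invoking the PBH test. Your two cases are the sequential counterparts of the paper's two steps --- the blow-up case is the shadow of the LMI becoming infeasible at a singular limit, and the invariance/PBH argument is the coordinate-free form of the paper's $A_{21}P_1$ contradiction --- so the underlying controllability mechanism is identical. Your route buys two things: it sidesteps the paper's loosely justified remark that replacing $\inf_{P\succ 0}$ by $\inf_{P\succeq 0}$ does not change the value ``by continuity'' (a claim the paper's own logic does not actually need once the relaxed optimizer is shown nonsingular), and it makes explicit a verification the paper leaves implicit, namely that the recovered $C$ really does realize $P^\star$ as its stationary error covariance (via uniqueness of the positive-definite ARE solution under controllability and detectability), hence attains the claimed value in \eqref{optmain}. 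The paper's route buys a one-line existence argument and presents the problem over a closed feasible set, i.e., in the form one would actually hand to an SDP solver.
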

\begin{proof}
Since we have \eqref{eqsdrinf}, it is left to show that the optimal value is attained. By continuity, $\inf_{P\succ 0, Q}$ in \eqref{eqsdrinf} can be replaced with $\inf_{P\succeq 0, Q}$ without changing the optimal value. After this replacement, the existence of an optimal solution is guaranteed by Weierstrass' theorem \cite[Proposition A.8]{bertsekas1995nonlinear}, since the feasible domain for $(P,Q)$ is closed and the objective function is coercive. Thus  $\inf_{P\succeq 0, Q}$ can be written as $\min_{P\succeq 0, Q}$:
\begin{subequations}
\label{eqsdpsucceq}
\begin{align}
R(D)=\min_{P\succeq 0, Q} &\;\; \text{Tr}(A) +\frac{1}{2}\text{Tr}(Q) \label{eqsdpsucceq1} \\
\text{s.t.}\;\; & \; AP+PA^\top+BB^\top \succeq 0 \label{eqsdpsucceq2}\\
& \left[\begin{array}{cc}
Q & B^\top \!\!\\
B &\!\! P
\end{array}\right]\succeq 0 \label{eqsdpsucceq3}\\
&\;\; \text{Tr}(P)\leq D.
\end{align}
\end{subequations}
Now, we show that if $(P,Q)$ is an optimal solution to \eqref{eqsdpsucceq}, then $P$ is nonsingular. To show this by contradiction, assume $P\succeq 0$ is singular. Without loss of generality, assume
\[
P=\left[\begin{array}{cc} P_1 & 0 \\ 0 & 0\end{array}\right] \text{ with } P_1 \succ 0.
\]
Also, consider a corresponding partitioning of $A$ and $B$:
\[
A=\left[\begin{array}{cc} A_{11} & A_{12} \\ A_{21} & A_{22} \end{array}\right], B=\left[\begin{array}{c} B_1 \\ B_2 \end{array}\right].
\]

First, if $(P, Q)$ is a feasible solution to \eqref{eqsdpsucceq}, then it must be that $\text{Im}(B) \subseteq \text{Im}(P)$. To see this by contradiction, suppose there exists a matrix $N$ such that $NB\neq 0$ and $NP=0$. Then, pre- and post-multiplying   
\eqref{eqsdpsucceq3} by $\text{diag}(I, N)$ and $\text{diag}(I, N^\top)$, we obtain
\[
\left[\begin{array}{cc} Q\! & B^\top N^\top \\ NB &\! 0 \end{array}\!\right] \succeq 0.
\]
However, this is a contradiction because a matrix of this structure with non-zero off-diagonal entries must be indefinite. Thus, we conclude that $\text{Im}(B) \subseteq \text{Im}(P)$ and $B_2=0$.

Next, it must be that $A_{21}\neq 0$ since otherwise
\[
\text{dim}(\text{Im}[\;B \;\; AB \;\; A^2B \;\; ... \;\; A^{n-1}B\;])<n
\]
which contradicts the controllability of $(A,B)$.

Finally, with the above observations, \eqref{eqsdpsucceq1} becomes
\[
\left[\begin{array}{cc} A_{11}P_1+P_1A_{11}^\top+B_1B_1^\top \! & P_1A_{21}^\top \\
A_{21}P_1 & 0 \end{array}\!\right] \succeq 0
\]
which is again a contradiction since off-diagonal matrices are non-zero. Thus, we conclude that if $(P,Q)$ is an optimal solution to \eqref{eqsdpsucceq}, then $P$ is nonsingular.
\end{proof}

\section{Application}
\label{secapp}
\begin{figure}[t]
    \centering
    \includegraphics[width=\columnwidth]{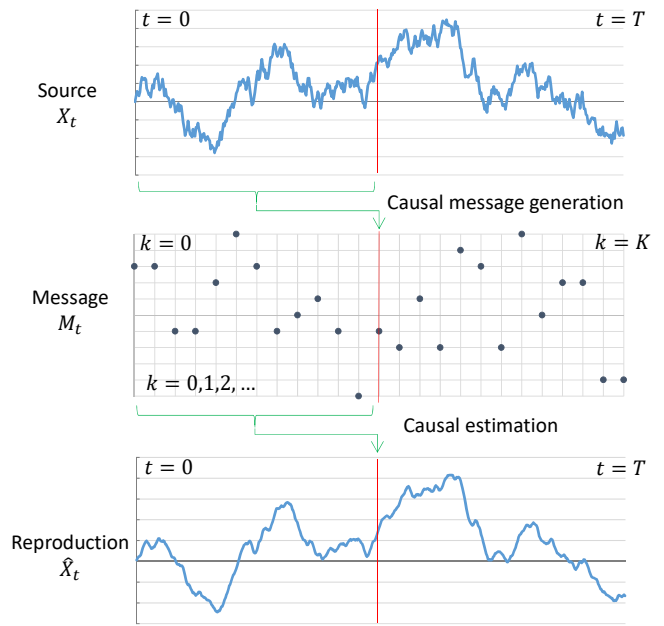}
    \caption{Zero delay source coding of continuous-time signal.}
    \label{fig:coding}
\end{figure}
In this section, we consider an application of the optimization problem \eqref{optmain} to a zero-delay source coding scenario depicted in Fig.~\ref{fig:coding}.

Let $X_t$ be a continuous-time source random process (e.g., video).
The source process is encoded with sampling period $\tau=\frac{T}{K}$, and a sequence of codewords $m_k=e_k(X_0^{k\tau})$, $k=1,
2, ... , K$ is generated. We assume $m_0=0$. For each $k=1,
2, ... , K$, we assume that $e_k$ is a $\mathbb{Z}^n$-valued map whose domain is the space of sample paths $X_t$, $0\leq t\leq k\tau$. A simple example is $n$ parallel scalar quantizers with uniform quantizer step sizes $\Delta=(\Delta_1, ... , \Delta_n)$:
\[
(m_k)_i=\lfloor \Delta_i (X_{k\tau})_i \rfloor, \forall i=1, 2, ... , n.
\]
Introduce a continuous-time process $M_t=m_{\lfloor \frac{t}{\tau} \rfloor}$ as the zero-order hold of $m_k$, and its time integral
\begin{equation}
\label{eqyZDSC}
Y_t=Y_0+\int_0^t M_s ds.
\end{equation}
At $t=k\tau$, $k=1, 2, ..., K$, the codeword $m_k$ is transmitted to the destination. At the destination, the decoder estimates $X_t$ in continuous-time based on the received information:
\[
\hat{X}_t=\mathbb{E}(X_t|M_0^t)=\mathbb{E}(X_t|Y_0^t).
\]
A function $\rho(X,\hat{X})=\int_0^T \mathbb{E}\|X_t-\hat{X}_t\|^2 dt$ is introduced as a distortion measure.

The above \emph{zero-delay source coding} scheme is denoted by $\mathsf{ZDSC}(\tau, e)$. (Notice that we are free to choose sampling period $\tau$ and encoding functions $e_1, ... , e_K$.)
Assuming that the source process is given by \eqref{eqprocess}, we are interested in the fundamental trade-off between the rate $\sum_{k=1}^K H(m_k)$ and the distortion $\rho(X_0^T,\hat{X}_0^T)$ achievable by $\mathsf{ZDSC}(\tau, e)$. Here, we are interested in the entropy $H(m_k)$ because it is related to the minimum expected codeword length if $m_k$ is represented by variable-length binary strings.

To analyze the fundamental performance limitation of $\mathsf{ZDSC}(\tau, e)$, we also consider a class of \emph{general causal reproduction} processes, denoted by $\mathsf{GCR}$, described below.
Let $(\Omega, \mathcal{F}, \mathcal{P})$ be a complete probability space and let $\mathcal{F}_t\subset \mathcal{F}$ be a non-decreasing family of $\sigma$-algebras. Let  $(W_t, \mathcal{F}_t)$ and $(V_t, \mathcal{F}_t)$ be mutually independent $n$-dimensional Wiener processes. Let the source process $X_t$ be defined by \eqref{eqprocess}.
Consider a random process $Y_t$ that can be represented by a stochastic integral
\begin{equation}
\label{eqyGZDSC}
Y_t=Y_0+\int_0^t M_s(X) ds + \int_0^t N_s(X)dV_s
\end{equation}
where for each $0 \leq t \leq T$, functions $M_t$ and $N_t$ are $\mathcal{F}_t^X$-measurable.
The source process is reproduced by
\[
\hat{X}_t=\mathbb{E}(X_t|Y_0^t).
\]
Notice that $\mathsf{ZDSC}(\tau, e)$ is a special case of $\mathsf{GCR}$ where \eqref{eqyGZDSC} has a special form \eqref{eqyZDSC}.

Notice that the following chain of inequalities holds.
\begin{subequations}
\begin{align}
&\min_{\mathsf{ZDSC}(\tau, e): \rho(X_0^T,\hat{X}_0^T)\leq D} \sum_{k=1}^T H(m_k) \label{chain1}\\
\geq & \min_{\mathsf{ZDSC}(\tau, e): \rho(X_0^T,\hat{X}_0^T)\leq D} H(m_1, ... , m_K) \label{chain2}\\
=& \min_{\mathsf{ZDSC}(\tau, e): \rho(X_0^T,\hat{X}_0^T)\leq D} H(Y_0^T) \label{chain3} \\
=& \min_{\mathsf{ZDSC}(\tau, e): \rho(X_0^T,\hat{X}_0^T)\leq D} I(X_0^T; Y_0^T) \label{chain4}\\
\geq & \min_{\mathsf{ZDSC}(\tau, e): \rho(X_0^T,\hat{X}_0^T)\leq D} I(X_0^T; \hat{X}_0^T) \label{chain5} \\
\geq & \min_{\mathsf{GCR}: \rho(X_0^T,\hat{X}_0^T)\leq D} I(X_0^T; \hat{X}_0^T) \label{chain6}
\end{align}
\end{subequations}
Equality \eqref{chain4} holds because $H(Y_0^T)=I(X_0^T; Y_0^T)+H(Y_0^T|X_0^T)$, and the second term is zero since under $\mathsf{ZDSC}(\tau, e)$ the map from $X_0^T$ to $Y_0^T$ is deterministic. \eqref{chain5} is the data-processing inequality. The last inequality \eqref{chain6} holds since $\mathsf{ZDSC}(\tau, e)$ is a special case of $\mathsf{GCR}$.
 
 Therefore, the smallest data rate that the zero delay source code can attain in average over the infinite horizon:
 \begin{align*}
R_{\mathsf{ZDSC}}(D)\triangleq \min_{\mathsf{ZDSC}(\tau, e)} & \limsup_{K\rightarrow +\infty} \frac{1}{K\tau}\sum_{k=1}^K H(m_k) \\
 \text{ s.t. } \;\;\;& \limsup_{K\rightarrow +\infty} \frac{1}{K\tau} \rho(\bx_0^{K\tau},\hat{\bx}_0^{K\tau}) \leq D
 \end{align*}
is lower bounded by
\begin{subequations}
\label{eqzdrdf}
 \begin{align}
R^*(D)\triangleq  \min_{\mathsf{GCR}} \;& \limsup_{T\rightarrow +\infty} \frac{1}{T}I(\bx_0^T;\hat{\bx}_0^T) \\
 \text{ s.t. } & \limsup_{T\rightarrow +\infty} \frac{1}{T} \rho(\bx_0^{T},\hat{\bx}_0^{T}) \leq D.
 \end{align}
 \end{subequations}
 Thus, we are interested in computing the function $R^*(D)$ since it
 provides a fundamental performance limitation for zero-delay
 source coding schemes.
 
Now, notice that the linear observation process \eqref{eqobs} is a special case of \eqref{eqyGZDSC}. Consequently, the functions $R(D)$ defined by \eqref{optmain} and $R^*(D)$ defined by \eqref{eqzdrdf} must satisfy
\begin{equation}
\label{eqzdrdfbound}
R^*(D)\leq R(D), \;\; \forall D >0.
\end{equation}
Since $R(D)$ is semidefinite representable (Theorem~\ref{theosdr}), computing $R(D)$ is straightforward. Unfortunately, the inequality \eqref{eqzdrdfbound} is not of great use because it only shows that $R(D)$ is an upper bound of a lower bound $R^*(D)$ of the smallest achievable data rate $R_{\mathsf{ZDSC}}(D)$. Nevertheless, guided by the analogy with the corresponding discrete-time results in \cite{tanaka2014semidefinite,srdstationary}, we conjecture that the inequality in (22) is actually the exact equality:
\begin{conjecture}
\label{conj1}
$R^*(D)= R(D), \;\; \forall D >0.$
\end{conjecture}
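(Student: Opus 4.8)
The plan is to establish the reverse inequality $R^*(D)\ge R(D)$; combined with \eqref{eqzdrdfbound} this yields the claimed equality. Equivalently, I would show that no general causal reproduction in $\mathsf{GCR}$ can undercut the information rate achieved by the optimal linear Gaussian sensor of Theorem~\ref{theosdr}. Fix an arbitrary $\mathsf{GCR}$ scheme \eqref{eqyGZDSC} meeting the distortion constraint, write $e_t=X_t-\hat{X}_t$ for the reproduction error and $\Sigma_t=\mathbb{E}(e_te_t^\top)$ for its covariance, and let $\bar{\Sigma}=\limsup_{T\to\infty}\tfrac{1}{T}\int_0^T\Sigma_t\,dt$, so that the distortion constraint reads $\text{Tr}(\bar{\Sigma})\le D$. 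The converse then rests on two ingredients: a \emph{feasibility} statement, that $\bar{\Sigma}$ satisfies the linear matrix inequality $A\bar{\Sigma}+\bar{\Sigma}A^\top+BB^\top\succeq 0$ appearing in \eqref{eqsdrinf}, and an \emph{information lower bound} matching the SDP objective.

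For feasibility, I would argue that since $\hat{X}_t=\mathbb{E}(X_t\mid Y_0^t)$ is the conditional mean, the error covariance obeys a filtering-type differential inequality $\dot{\Sigma}_t\preceq A\Sigma_t+\Sigma_tA^\top+BB^\top$, the omitted term being the positive semidefinite covariance of the information gained from the innovations; integrating and using $\tfrac{1}{T}(\Sigma_T-\Sigma_0)\to 0$ along any convergent subsequence gives $A\bar{\Sigma}+\bar{\Sigma}A^\top+BB^\top\succeq 0$. Hence every admissible $\bar{\Sigma}$ is feasible for the program \eqref{eqsdrinf}. Because that program is equivalent, at optimum, to minimizing $\text{Tr}(A)+\tfrac{1}{2}\text{Tr}(B^\top P^{-1}B)$ over feasible $P$ with $\text{Tr}(P)\le D$ (this is the content of \eqref{eqstepA}), feasibility already yields $\text{Tr}(A)+\tfrac{1}{2}\text{Tr}(B^\top\bar{\Sigma}^{-1}B)\ge R(D)$, and it therefore suffices to prove the information lower bound
\[
\limsup_{T\to\infty}\frac{1}{T} I(X_0^T;\hat{X}_0^T)\ \ge\ \text{Tr}(A)+\tfrac{1}{2}\,\text{Tr}\!\left(B^\top\bar{\Sigma}^{-1}B\right).
\]

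To obtain this bound---the crux of the argument---I would discretize time with step $\delta$, replace \eqref{eqprocess} by its Euler--Maruyama approximation, and pass to discrete-time \emph{directed} information, which lower bounds $I(X_0^T;\hat{X}_0^T)$ by Massey's inequality and is exactly the quantity controlled by the discrete-time converse of Tanaka et al.~\cite{tanaka2014semidefinite,srdstationary}. The per-step terms $I(X^k;\hat{X}_k\mid \hat{X}^{k-1})$ are lower bounded by a maximum-entropy (Gaussian) argument: conditioning reduces differential entropy, and among errors of a prescribed conditional covariance the Gaussian is extremal. The deterministic drift contributes $\text{Tr}(A)\,\delta$ per step through $\det(I+\delta A)\approx 1+\delta\,\text{Tr}(A)$, while the injected noise $BB^\top\delta$ accounts for the $\tfrac{1}{2}\text{Tr}(B^\top\Sigma_t^{-1}B)$ term; summing, letting $\delta\to 0$, and finally invoking convexity of $P\mapsto\text{Tr}(B^\top P^{-1}B)$ (Jensen) to replace the time-average of $\Sigma_t^{-1}$ by $\bar{\Sigma}^{-1}$ yields the displayed inequality.

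The hard part will be making the continuous-time limit rigorous, and three obstacles stand out. First, the path-space differential entropies $h(X_0^T)$ and $h(X_0^T\mid\hat{X}_0^T)$ are only formally defined, so the maximum-entropy step must be carried out on the discrete skeleton and the interchange of the limits $\delta\to 0$ and $T\to\infty$ justified (e.g.\ via uniform integrability of $\text{Tr}(\Sigma_t^{-1})$ together with the convergence $\Sigma_t\to\bar{\Sigma}$). Second, the $\mathsf{GCR}$ class admits a state-dependent diffusion coefficient $N_s(X)\,dV_s$ in \eqref{eqyGZDSC}, and one must show that such additional observation noise cannot push the information rate below the SDP value---that is, that an extremal scheme may be taken with $N_s$ constant. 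Third, the filtering inequality $\dot{\Sigma}_t\preceq A\Sigma_t+\Sigma_tA^\top+BB^\top$ is immediate for linear sensors but requires justification for genuinely nonlinear conditional-mean filters; establishing it in full $\mathsf{GCR}$ generality, and thereby the feasibility of $\bar{\Sigma}$ for \eqref{eqsdrinf}, is where I expect the main technical effort to lie.
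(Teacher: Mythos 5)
The first thing to be clear about: the statement you were asked to prove is not proved in the paper either --- it is stated there explicitly as an open conjecture. The paper establishes only the one-sided bound $R^*(D)\leq R(D)$ in \eqref{eqzdrdfbound}, and then remarks that to establish Conjecture~\ref{conj1} one essentially needs to show that the optimal observation process within $\mathsf{GCR}$ is linear in $X$ and of the form \eqref{eqobs}. So your proposal cannot be compared against a paper proof; it is a plan for resolving an open problem, and as a plan it does not close. You acknowledge this yourself by listing three unresolved obstacles, but these are not peripheral technicalities to be cleaned up later --- one of them \emph{is} the conjecture.

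Concretely, the fatal gap is in the information lower bound, the step you defer to a ``maximum-entropy (Gaussian) argument.'' After discretization, the per-step quantity you must control is $I(X^k;\hat X_k\mid \hat X^{k-1}) \geq h(X_k\mid\hat X^{k-1}) - h(X_k\mid\hat X^k)$. Maximum-entropy reasoning bounds differential entropy from \emph{above} by a log-determinant of a covariance, which is exactly what is needed for the negative term $-h(X_k\mid\hat X^k)$; but for the positive term $h(X_k\mid\hat X^{k-1})$ you need a \emph{lower} bound of the form $\tfrac12\log\det\bigl(2\pi e\,((I+\delta A)\Sigma_{k-1}(I+\delta A)^\top+\delta BB^\top)\bigr)$, and no such bound exists: differential entropy is not bounded below by any function of the covariance (a conditional law of prescribed covariance can have entropy $-\infty$). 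The only unconditional lower bound available, $h(X_k\mid\hat X^{k-1})\geq h(w_{k-1})=\tfrac12\log\det(2\pi e\,\delta BB^\top)$, scales like $\tfrac n2\log\delta$ per step, so the summed bound diverges to $-\infty$ as $\delta\to 0$. This is precisely why the discrete-time converse in \cite{tanaka2014semidefinite,srdstationary} is not a per-step covariance bound but a structural result --- optimality of jointly Gaussian (linear) test channels --- and transplanting that Gaussian-optimality argument to $\mathsf{GCR}$, which moreover admits state-dependent and even vanishing observation noise $N_s(X)$ (so the channel is not additive Gaussian and I-MMSE identities such as Theorem~\ref{theoduncan} are unavailable), is exactly the open point the paper identifies; conditional entropy-power-inequality arguments, the natural fallback, are known to be loose for vector sources and would not recover the SDP value $R(D)$. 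I would add that your allocation of difficulty is inverted: the feasibility step you expect to be hardest --- $\Sigma_{t+s}\preceq e^{sA}\Sigma_t e^{sA^\top}+\int_0^s e^{rA}BB^\top e^{rA^\top}dr$ for nonlinear conditional-mean filters --- follows directly from the orthogonality principle, by comparing $\hat X_{t+s}$ with the $\mathcal{F}_{t+s}^{Y}$-measurable predictor $e^{sA}\hat X_t$. The conjecture lives entirely in the information bound, and your sketch of that bound, as written, fails.
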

To establish Conjecture~\ref{conj1}, one essentially  needs to prove that the optimal observation process \eqref{eqyGZDSC} is linear in $\bx$, and has the form \eqref{eqobs}.

\section{Conclusion}
\label{seccon}
We considered a continuous-time vector Gauss--Markov process being
estimated by the Kalman--Bucy filter based on the observation through
a vector Gaussian channel (sensor). The trade-off between the mutual
information rate between the source process and the estimation process
and the MMSE, as well as trade-off achieving sensor gain matrices, are studied
by means of semidefinite programming. A connection to the zero-delay
rate-distortion problem is also discussed. In this paper, we restricted
ourselves to  observation through Gaussian
channels. However, in the future, it is worth pursuing further whether or not
the I-MMSE trade-off can be improved by considering non-Gaussian and
nonlinear sensor mechanisms (Conjecture~1).  Zero-delay source coding
schemes that (approximately) attain the obtained trade-off function
should also be considered in the future.




\section*{APPENDIX}
\subsection{Proof of equation \eqref{eqduncan}.}
\label{appA}

Let $\mathcal{C}^n=\left(C([0,T],\mathbb{R}^n),\mathcal{B}(C([0,T],\mathbb{R}^n))\right)$ be the measurable space of continuous functions $x=(x_t, t\in [0,T])$, $x:[0,T]\to
\mathbb{R}^n$ with $x_0=0$, equipped with the Borel $\sigma$-algebra
$\mathcal{B}_{\mathcal{C}^n}=\mathcal{B}(C([0,T],\mathbb{R}^n))$.
Consider two stochastic processes $Y=(Y_t, t\in [0,T])$ and  $Z=(Z_t, t\in [0,T])$ in a probability space $(\Omega, \mathcal{F}, P)$ related by
\begin{equation}
\label{eqyprocess}
d Y_t=Z_t dt+dV_t, \;\;\;Y_0=0
\end{equation}
where $V=(V_t, t\in [0,T])$ is the $n$-dimensional standard Brownian motion independent of $Z$.
Assume that $Z$ satisfies
\begin{equation}
\label{eqmsz}
\mathbb{E}\int_0^T \|Z_t\|^2 dt < \infty
\end{equation}
and $Z_0=0$.
Let $\mu_Y, \mu_V$ and $\mu_Z$ be probability measures on $\mathcal{C}^n$ defined by
\begin{align*}
\mu_Y(B_Y)&=P\{\omega: Y(\omega)\in B_Y\}, \;\; B_Y\in \mathcal{B}_{\mathcal{C}^n} \\
\mu_V(B_V)&=P\{\omega: V(\omega)\in B_V\}, \;\; B_V\in \mathcal{B}_{\mathcal{C}^n} \\
\mu_Z(B_Z)&=P\{\omega: Z(\omega)\in B_Z\}, \;\; B_Z\in \mathcal{B}_{\mathcal{C}^n}.
\end{align*}
In particular, $\mu_V$ is the Wiener measure. When $\mu_Y \ll \mu_V$, denote  by 
\[
\frac{d\mu_Y}{d\mu_V}: C[0,T] \rightarrow [0,\infty)
\]
the Radon-Nikodym derivative.

Let $\mu_{YZ}$ and $\mu_{VZ}$ be  joint measures on
$\mathcal{C}^n\times \mathcal{C}^n$ defined by the extensions of
\begin{align*}
\mu_{YZ}(B_Y\times B_Z)&=P\{\omega: Y(\omega)\in B_Y, Z(\omega)\in B_Z \}, \\
\mu_{VZ}(B_V\times B_Z)&=P\{\omega: V(\omega)\in B_V, Z(\omega)\in B_Z \},
\end{align*}
for $B_Y,B_V, B_Z\in
\mathcal{B}_{\mathcal{C}^n}$.
Since $V$ and $Z$ are independent,
$
\mu_{VZ}=\mu_V \otimes \mu_Z 
$
where $\mu_V \otimes \mu_Z$ is the product measure. Whenever $\mu_{YZ} \ll \mu_{VZ}$, denote by
\[
\frac{\mu_{YZ}}{\mu_{VZ}}: C[0,T]\times C[0,T] \rightarrow [0,\infty)
\]
the Radon-Nikodym derivative.

First, we derive an explicit formula for $\frac{d\mu_{YZ}}{\mu_{VZ}}$.
\begin{theorem}[Girsanov Theorem]
\cite[Theorem 6.3]{liptser2012statistics}:
\label{theogirsanov}
Let $\kappa=(\kappa_t, t \in[0,T])$ be a supermartingale of the form
\[
\kappa_t=\text{exp}\left(-\int_0^t Z_s^\top dV_s -\frac{1}{2}\int_0^t \|Z_s\|^2 ds \right)
\]
where $P(\int_0^T \|Z_t\|^2 dt<\infty)=1$. If $\mathbb{E}\kappa_T=1$, then the process $Y$ defined by \eqref{eqyprocess} is a Wiener process with respect to a probability measure $\tilde{P}$ such that $\frac{d\tilde{P}}{dP}=\kappa_T$.
\end{theorem}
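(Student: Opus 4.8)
The plan is to prove the statement by constructing the candidate measure $\tilde{P}$ explicitly and then verifying, via L\'evy's characterization, that $Y$ is a standard Wiener process under it. First I would set $\frac{d\tilde{P}}{dP}=\kappa_T$; since $\kappa_T>0$ $P$-almost surely and $\mathbb{E}\kappa_T=1$, this defines a probability measure $\tilde{P}$ equivalent to $P$. A preliminary observation is that the nonnegative supermartingale $\kappa$ is in fact a genuine martingale: $\mathbb{E}\kappa_t$ is nonincreasing with $\mathbb{E}\kappa_0=\mathbb{E}\kappa_T=1$, forcing $\mathbb{E}\kappa_t\equiv 1$, which upgrades the supermartingale to a martingale. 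This is what makes the restrictions of $\tilde{P}$ to the $\mathcal{F}_t$ consistent, with local density $\kappa_t$.

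Next I would compute the dynamics of $\kappa$. Applying It\^o's formula to $\kappa_t=\exp(A_t)$ with $A_t=-\int_0^t Z_s^\top dV_s-\frac12\int_0^t\|Z_s\|^2 ds$ yields
\[
d\kappa_t=-\kappa_t Z_t^\top dV_t,
\]
so $\kappa$ solves a driftless linear SDE. To identify the law of $Y$ under $\tilde{P}$ I would invoke the change-of-measure rule: a process $M$ is a $\tilde{P}$-local martingale if and only if $M\kappa$ is a $P$-local martingale. Writing $Y=(Y^1,\dots,Y^n)$ and using $dY_t^i=Z_t^i\,dt+dV_t^i$, the It\^o product rule gives
\[
d(Y_t^i\kappa_t)=Y_t^i\,d\kappa_t+\kappa_t\,dY_t^i+d\langle Y^i,\kappa\rangle_t,
\]
where the cross-variation term contributes $d\langle V^i,\kappa\rangle_t=-\kappa_t Z_t^i\,dt$. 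The two drift contributions $\kappa_t Z_t^i\,dt$ and $-\kappa_t Z_t^i\,dt$ cancel exactly, leaving $Y^i\kappa$ a $P$-local martingale; hence each $Y^i$ is a $\tilde{P}$-local martingale.

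It then remains to control the quadratic variation. Because the finite-variation drift $\int_0^\cdot Z_s\,ds$ does not contribute to quadratic variation, and because quadratic variation is invariant under an equivalent change of measure, one has $\langle Y^i,Y^j\rangle_t=\langle V^i,V^j\rangle_t=\delta_{ij}t$ under $\tilde{P}$. Together with the continuity of $Y$ and the $\tilde{P}$-local-martingale property just established, L\'evy's characterization theorem identifies $Y$ as a standard $n$-dimensional Wiener process under $\tilde{P}$, completing the argument.

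I expect the main obstacle to be integrability rather than algebra: the drift cancellation and the quadratic-variation computation are routine, but passing rigorously from the local-martingale statements to honest martingales, and guaranteeing that $\tilde{P}$ is a bona fide probability measure, hinges on $\kappa$ being a true martingale. This is precisely the role of the hypothesis $\mathbb{E}\kappa_T=1$, which is exactly the condition certifying that $\kappa$ is a true (not strict) martingale, the conclusion that Novikov's or Kazamaki's criteria are designed to guarantee; without it $\kappa$ could be a strict supermartingale and $\tilde{P}$ would have total mass less than one. A careful treatment would localize along a sequence of stopping times and use the uniform integrability supplied by this hypothesis to remove the localization.
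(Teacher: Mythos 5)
The paper offers no proof of this statement at all: it is imported verbatim as Theorem 6.3 of Liptser and Shiryaev, and the paper's ``proof'' is simply the citation. Your argument is therefore necessarily a different route --- it is the standard self-contained proof of Girsanov's theorem (the one found in Karatzas--Shreve or {\O}ksendal), and its steps are sound. In particular: $\mathbb{E}\kappa_T=1$ combined with the supermartingale property forces $\mathbb{E}\kappa_t\equiv 1$, which upgrades the nonnegative supermartingale $\kappa$ to a true martingale, so $\frac{d\tilde{P}}{dP}=\kappa_T$ defines a probability measure (equivalent to $P$, since $P(\int_0^T\|Z_t\|^2dt<\infty)=1$ makes $\kappa_T>0$ a.s.) with consistent local densities $\kappa_t$; It\^o's formula gives $d\kappa_t=-\kappa_t Z_t^\top dV_t$; the product rule shows the two drift terms in $d(Y_t^i\kappa_t)$ cancel, so $Y^i\kappa$ is a $P$-local martingale and hence $Y^i$ is a $\tilde{P}$-local martingale; and invariance of quadratic variation under equivalent measures together with L\'evy's characterization identifies $Y$ as a standard Wiener process under $\tilde{P}$. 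You also correctly isolate the one genuinely delicate point (localization and uniform integrability in passing from local martingales to the conclusion) and the precise role of the hypothesis $\mathbb{E}\kappa_T=1$. As for what each approach buys: the paper's bare citation is the economical choice, since the theorem is classical and peripheral to its contribution; your proof, by contrast, would make the appendix self-contained and makes transparent where each hypothesis enters. It is also worth noting that the cited source proves this theorem by a different technique (conditional characteristic functions of the increments of $Y$ under $\tilde{P}$, showing directly that they are independent Gaussians), so your L\'evy-characterization route is a genuinely distinct, arguably more streamlined, modern argument rather than a reconstruction of the reference's proof.
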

\begin{proof}
See \cite[Theorem 6.3]{liptser2012statistics}.
\end{proof}

Note that condition \eqref{eqmsz} implies $P(\int_0^T \|Z_t\|^2 dt<\infty)=1$. To see this, consider
\begin{align*}
P\left(\int_0^T \|Z_t\|^2 dt<\infty\right)&\geq \sup_{R> 0} P\left(\int_0^T \|Z_t\|^2 dt\leq R\right) \\
&\geq  \sup_{R> 0} \left( 1-\frac{\mathbb{E}\int_0^T \|Z_t\|^2 dt}{R^2} \right) \\
&=1
\end{align*}
where the Chebyshev inequality is used in the second inequality. Moreover, since $Z$ and $V$ are independent, it follows that $\mathbb{E}\kappa_T=1$ \cite[Section 6.2, Example 4]{liptser2012statistics}.
Thus, premises of Theorem~\ref{theogirsanov} are satisfied. Condition \eqref{eqmsz} also implies $P(|\int_0^T Z_t^\top dV_t |<\infty)=1$. This can be verified as 
\begin{align*}
P\left(\left|\int_0^T Z_t^\top dV_t \right|<\infty\right)&=P\left(\left|\int_0^T Z_t^\top dV_t\right|^2<\infty\right) \\
&\geq \sup_{R> 0} P\left(\left|\int_0^T Z_t^\top dV_t \right|^2 \leq R \right) \\
&\geq  \sup_{R> 0} \left( 1-\frac{\mathbb{E}|\int_0^T Z_t^\top dV_t|^2}{R^2} \right) \\
&=  \sup_{R> 0} \left( 1-\frac{\mathbb{E}\int_0^T \|Z_t\|^2 dt}{R^2} \right) \\
&=1
\end{align*}
where the It\^o isometry \cite[Corollary 3.1.7]{oksendal2003stochastic} is used in the fourth line.
Hence $P(\kappa_T=0)=0$. Thus, by Theorem~\ref{theogirsanov}, together with \cite[Lemma 6.8]{liptser2012statistics}, we also have $P \ll \tilde{P}$ and $\frac{dP}{d\tilde{P}}=\kappa_T^{-1}$.
Now,
\begin{align}
\mu_{YZ}(B_Y\times B_Z)&=\int_{\{\omega:Y(\omega)\in B_Y, Z(\omega)\in B_Z\}} dP(\omega)  \nonumber \\
&=\int_{\{\omega:Y(\omega)\in B_Y, Z(\omega)\in B_Z\}} \!\!\! \kappa_T^{-1} d\tilde{P}(\omega) 
\end{align}
On the other hand, since $Y$ is a Wiener process under $\tilde{P}$, the joint probability distribution of $Y$ and $Z$ under $\tilde{P}$ is the same as the joint probability distribution of $V$ and $Z$ under $P$. Therefore,
\begin{align}
\frac{d\mu_{YZ}}{d\mu_{VZ}}(&Y(\omega), Z(\omega))=\kappa_T^{-1}(\omega) \nonumber \\
&=\text{exp}\left( \int_0^T Z_t^\top dV_t + \frac{1}{2}\int_0^T \|Z_t\|^2 dt \right) \nonumber \\
&=\text{exp}\left( \int_0^T Z_t^\top dY_t - \frac{1}{2}\int_0^T \|Z_t\|^2 dt \right). \label{eqmuyzmuvz}
\end{align}

Next, we derive an explicit formula for $\frac{d\mu_Y}{d\mu_V}$. 
\begin{theorem} 
\label{theo713}
\cite[Theorem 7.13]{liptser2012statistics}:
Let $\kappa=(\kappa_t, t\in[0,T])$ be a supermartingale of the form
\[
\kappa_t=\text{exp}\left(-\int_0^t Z_s^\top dV_s -\frac{1}{2}\int_0^t \|Z_s\|^2 ds \right)
\]
where $\int_0^T \mathbb{E}\|Z_t\| dt <\infty$ and $P(\int_0^T \|Z_t\|^2 dt<\infty)=1$. If $\mathbb{E}\kappa_T=1$, then $\mu_Y \ll \mu_V$, $\mu_V \ll \mu_Y$, and 
\[
\frac{d\mu_Y}{d\mu_V}(Y(\omega))=\text{exp}\left(\int_0^T \hat{Z}_t dY_t -\frac{1}{2}\int_0^T \|\hat{Z}_t\|^2 dt \right)
\]
where $\hat{Z}_t=\mathbb{E}(Z_t|\mathcal{F}_t^Y)$, $0\leq t\leq T$.
\vspace{0.3ex}
\end{theorem}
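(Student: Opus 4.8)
The plan is to reduce Theorem~\ref{theo713} to the Girsanov theorem (Theorem~\ref{theogirsanov}) already in hand, but applied to a \emph{different} representation of $Y$. The obstacle to reading off $\frac{d\mu_Y}{d\mu_V}$ from the joint formula \eqref{eqmuyzmuvz} is that the latter involves the whole path $Z_t$, whereas the marginal derivative must be a functional of the $Y$-path alone. Conceptually, the bridge is the marginalization identity obtained from \eqref{eqmuyzmuvz} by a Fubini argument using $\mu_{VZ}=\mu_V\otimes\mu_Z$, which realizes $\frac{d\mu_Y}{d\mu_V}(Y)$ as a conditional expectation of the joint density given $\mathcal{F}_T^Y$; the conditional mean $\hat Z_t=\mathbb{E}(Z_t\mid\mathcal{F}_t^Y)$ is precisely what survives this conditioning. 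I would make this rigorous through the innovations representation of $Y$.

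First I would establish the innovations step. Define $\bar V_t = Y_t - \int_0^t \hat Z_s\,ds$ and show that $(\bar V_t,\mathcal{F}_t^Y)$ is an $n$-dimensional standard Wiener process under $P$. For the martingale property, for $s\le t$ I write $Y_t - Y_s = \int_s^t Z_u\,du + (V_t - V_s)$ and take $\mathbb{E}(\cdot\mid\mathcal{F}_s^Y)$: the tower property over $\mathcal{F}_u^Y\supseteq\mathcal{F}_s^Y$ replaces $\int_s^t Z_u\,du$ by $\int_s^t \hat Z_u\,du$ in conditional expectation and the Brownian increment averages out, giving $\mathbb{E}(\bar V_t-\bar V_s\mid\mathcal{F}_s^Y)=0$. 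Continuity is inherited from $Y$, and the quadratic variation is $\langle\bar V\rangle_t = tI$ since the bounded-variation drift does not contribute; Lévy's characterization then identifies $\bar V$ as Wiener. This yields the representation $dY_t = \hat Z_t\,dt + d\bar V_t$, which has exactly the structure of \eqref{eqyprocess}, but now with an $\mathcal{F}_t^Y$-adapted drift and an $\mathcal{F}_t^Y$-Wiener process.

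Second, I would apply Theorem~\ref{theogirsanov} to $dY_t = \hat Z_t\,dt + d\bar V_t$. Setting $\tilde\kappa_t = \exp\bigl(-\int_0^t \hat Z_s^\top d\bar V_s - \tfrac12\int_0^t \|\hat Z_s\|^2\,ds\bigr)$ and defining $\hat P$ by $\frac{d\hat P}{dP}=\tilde\kappa_T$, the process $Y$ becomes Wiener under $\hat P$, so its law under $\hat P$ is $\mu_V$ while its law under $P$ is $\mu_Y$. Since $\frac{dP}{d\hat P}=\tilde\kappa_T^{-1}$ is $\mathcal{F}_T^Y$-measurable (a functional of the $Y$-path), a test-functional computation shows it equals $\frac{d\mu_Y}{d\mu_V}$ evaluated along $Y$. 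Substituting $d\bar V_t = dY_t - \hat Z_t\,dt$ collapses $\tilde\kappa_T^{-1}=\exp\bigl(\int_0^T \hat Z_t^\top d\bar V_t + \tfrac12\int_0^T\|\hat Z_t\|^2\,dt\bigr)$ to $\exp\bigl(\int_0^T \hat Z_t^\top dY_t - \tfrac12\int_0^T\|\hat Z_t\|^2\,dt\bigr)$, the claimed formula; mutual absolute continuity $\mu_Y\ll\mu_V\ll\mu_Y$ follows since $\tilde\kappa_T,\tilde\kappa_T^{-1}$ are a.s. finite and positive.

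The main obstacle is the justification of the innovations step in tandem with verifying that $\tilde\kappa$ is a \emph{true} martingale, i.e. $\mathbb{E}\tilde\kappa_T=1$, so that $\hat P$ is a genuine probability measure and Theorem~\ref{theogirsanov} applies on $\mathcal{F}_t^Y$. The weak hypotheses $\int_0^T \mathbb{E}\|Z_t\|\,dt<\infty$ and $P(\int_0^T\|Z_t\|^2\,dt<\infty)=1$ are tailored to this passage: one must move from $\mathbb{E}\kappa_T=1$ on the full filtration (already used to derive \eqref{eqmuyzmuvz}) to $\mathbb{E}\tilde\kappa_T=1$ on the observation filtration. I expect to secure this through the marginalization identity $\frac{d\mu_Y}{d\mu_V}(Y)=\mathbb{E}_{\tilde P}(\kappa_T^{-1}\mid\mathcal{F}_T^Y)$, which exhibits $\frac{d\mu_Y}{d\mu_V}$ as a conditional expectation of mean one, and then identifying that conditional expectation with the filtering exponential via the innovations representation. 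Controlling the stochastic integral $\int_0^T\hat Z_t^\top d\bar V_t$ under these moment assumptions, rather than a clean $L^2$ bound, is the technically delicate point.
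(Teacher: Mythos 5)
The first thing to note is that the paper does not actually prove Theorem~\ref{theo713}: its ``proof'' is the citation to Liptser--Shiryaev, where this is Theorem 7.13. So the relevant comparison is with the proof in that reference. Your plan --- (i) establish the innovation representation $dY_t=\hat Z_t\,dt+d\bar V_t$, where $\bar V_t=Y_t-\int_0^t\hat Z_s\,ds$ is an $(\mathcal{F}_t^Y)$-Wiener process (tower property for the drift, independence of Brownian increments from $\mathcal{F}_s^Y$, L\'evy's characterization), and then (ii) run a Girsanov argument on the observation filtration --- is exactly the strategy of that book (their innovation theorem feeding into the density formula), and your step (i) is correct as sketched.

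The genuine gap is in step (ii), which is where all the technical content of the theorem lives. Applying Theorem~\ref{theogirsanov} to $dY_t=\hat Z_t\,dt+d\bar V_t$ requires two hypotheses you never establish: $P\bigl(\int_0^T\|\hat Z_t\|^2dt<\infty\bigr)=1$ and $\mathbb{E}\tilde\kappa_T=1$. The first does not follow from $P\bigl(\int_0^T\|Z_t\|^2dt<\infty\bigr)=1$ by Jensen's inequality: Jensen bounds $\|\hat Z_t\|^2$ by $\mathbb{E}(\|Z_t\|^2\mid\mathcal{F}_t^Y)$, which under the stated first-moment-only hypotheses need not be finite, so the almost-sure square integrability of $\hat Z$ requires a separate argument. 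For the second, your proposed resolution is circular as written: the marginalization identity $\frac{d\mu_Y}{d\mu_V}(Y)=\mathbb{E}_{\tilde P}(\kappa_T^{-1}\mid\mathcal{F}_T^Y)$ is correct and does yield $\mu_Y\ll\mu_V$, but ``identifying that conditional expectation with the filtering exponential'' is precisely the density formula the theorem asserts, so it cannot be used to supply the hypothesis of the Girsanov step that is meant to prove it. There are two non-circular ways to close the argument: (a) carry out the Kallianpur--Striebel/Zakai computation, i.e., show via the Fujisaki--Kallianpur--Kunita lemma on conditional expectations of stochastic integrals that $\rho_t:=\mathbb{E}_{\tilde P}(\kappa_t^{-1}\mid\mathcal{F}_t^Y)$ satisfies $d\rho_t=\rho_t\hat Z_t^\top dY_t$, $\rho_0=1$, whence $\rho_T=\exp\bigl(\int_0^T\hat Z_t^\top dY_t-\tfrac12\int_0^T\|\hat Z_t\|^2dt\bigr)$ and $\mathbb{E}\tilde\kappa_T=1$ drops out as a corollary rather than entering as an input; or (b) as Liptser--Shiryaev do, invoke the sharper absolute-continuity theorems for \emph{diffusion-type} processes (their Theorems 7.5--7.7), which apply to the innovation representation precisely because $\hat Z$ is $\mathcal{F}_t^Y$-adapted and which require only the almost-sure finiteness of $\int_0^T\|\hat Z_t\|^2dt$, with no normalization condition at all. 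Either route demands exactly the analysis you deferred, so the proposal is a correct skeleton whose decisive step is missing.
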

\begin{proof}
See \cite[Theorem 7.13]{liptser2012statistics}.
\end{proof}

Note that condition \eqref{eqmsz}, together with Jensen's inequality
\[
\left(\frac{1}{T}\int_0^T \mathbb{E} \|Z_t\|dt\right)^2 \leq \frac{1}{T}\int_0^T \mathbb{E}\|Z_t\|^2 dt
\]
implies $\int_0^T \mathbb{E} \|Z_t\|dt < \infty$. Thus, Theorem~\ref{theo713} is applicable, and
\begin{equation}
\label{eqmuvmuy}
\frac{d\mu_V}{d\mu_Y}(Y(\omega))=\text{exp}\left( -\int_0^T \hat{Z}_t^\top dY_t  +\frac{1}{2}\int_0^T \|\hat{Z}_t\|^2 dt\right).
\end{equation}

Finally, we derive an explicit formula for the mutual information 
\[
I(Y;Z)\triangleq \int_{\mathcal{C}^n \times \mathcal{C}^n} \log \frac{d\mu_{YZ}}{d(\mu_Y\otimes \mu_Z)}d\mu_{YZ}.
\]
Using the definition of Radon-Nikodym derivative, one can verify the chain rule
\[
\frac{d\mu_{YZ}}{d(\mu_V \otimes \mu_Z)}\frac{d\mu_V}{d\mu_Y}=\frac{d\mu_{YZ}}{d(\mu_Y\otimes \mu_Z)}.
\] 
Thus
\begin{align}
&\log \frac{d\mu_{YZ}}{d(\mu_Y\otimes \mu_Z)} \nonumber \\
&=\log \frac{d\mu_{YZ}}{d\mu_{VZ}} + \log \frac{d\mu_V}{d\mu_Y} \nonumber \\
&=\int_0^T (Z_t-\hat{Z}_t)^\top dY_t-\frac{1}{2}\int_0^T (\|Z_t\|^2-\|\hat{Z}_t\|^2) dt \label{eqlog1}\\
&=\int_0^T (Z_t-\hat{Z}_t)^\top dV_t+\frac{1}{2}\int_0^T \|Z_t- \hat{Z}_t\|^2 dt. \label{eqlog2}
\end{align}
Equations  \eqref{eqmuyzmuvz} and \eqref{eqmuvmuy} are used in \eqref{eqlog1}. 
Taking the expectation, the first term in \eqref{eqlog2} vanishes \cite[Theorem 3.2.1]{oksendal2003stochastic}.
Thus,
\[
I(Y;Z)=\mathbb{E}\log \frac{d\mu_{YZ}}{d(\mu_Y\otimes \mu_Z)}=\frac{1}{2}\int_0^T \mathbb{E}\|Z_t-\hat{Z}_t\|^2 dt. \]

\subsection{Proof of equation \eqref{eqIXYZ}}
\label{appB}

Let $(\mathcal{C}^n, \mathcal{B}_{\mathcal{C}^n})$ be the measurable space of continuous functions as defined in Appendix A.
Consider stochastic processes $X=(X_t,t\in [0,T])$ and $Y=(Y_t, 
t\in [0,T])$ defined in (1), (2) and $Z_t=CX_t$. Since $X$ is an Ito
process, its trajectory is a.s. continuous, and so are trajectories of
$Y$ and $Z$. This allows us to define measures $\mu_X$, $\mu_Y$, $\mu_Z$ on $\mathcal{C}^n$ by 
\begin{align*}
\mu_X(B_X)&=P\{\omega:X_0^T(\omega)\in B_X\} \\
\mu_Y(B_Y)&=P\{\omega:Y_0^T(\omega)\in B_Y\} \\
\mu_Z(B_Z)&=P\{\omega:Z_0^T(\omega)\in B_Z\} =P\{\omega:CX_0^T(\omega)\in B_Z\}
\end{align*}
where $B_X, B_Y, B_Z\in \mathcal{B}_{\mathcal{C}^n}$.
Consider a mapping $c: \mathcal{C}^n\rightarrow \mathcal{C}^n$ defined by $z=Cx$. For each $B_Z\in \mathcal{B}_{\mathcal{C}^n}$, define $c^{-1}(B_Z)\in \mathcal{B}_{\mathcal{C}^n}$ by $c^{-1}(B_Z)\triangleq \{x\in \mathcal{C}^n: Cx\in B_Z\}$.

\begin{claim}
\label{claimmuxz}
$
\mu_Z(B_Z)=\mu_X(c^{-1}B_Z) \quad \forall B_Z\in \mathcal{B}_{\mathcal{C}^n}.
$
\end{claim}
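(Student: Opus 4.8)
The plan is to identify the claimed equality as the elementary image-measure (pushforward) relation $\mu_Z = c_\ast \mu_X$, where $c_\ast \mu_X$ denotes the law of $X_0^T$ transported through the deterministic path map $c$. The conceptual point is that, because $C$ is a constant matrix, forming $Z=CX$ commutes with passing to sample paths: for each $\omega$, the trajectory $Z_0^T(\omega)$ is exactly the image $c(X_0^T(\omega))$ as an element of $\mathcal{C}^n$. Once this pathwise identity is recorded, the claim reduces to rewriting the event defining $\mu_Z$.

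First I would verify that $c$ is a well-defined, Borel measurable self-map of $(\mathcal{C}^n, \mathcal{B}_{\mathcal{C}^n})$. Since $x_t \mapsto Cx_t$ is linear, it maps a continuous path with $x_0=0$ to a continuous path $Cx$ with $(Cx)_0=0$, so $c$ indeed sends $\mathcal{C}^n$ into $\mathcal{C}^n$. Moreover $c$ is Lipschitz in the sup-norm, since $\sup_{t}\|Cx_t-Cx'_t\| \le \|C\|\,\sup_t\|x_t-x'_t\|$; being continuous, it is Borel measurable, which guarantees $c^{-1}(B_Z)\in\mathcal{B}_{\mathcal{C}^n}$ for every $B_Z\in\mathcal{B}_{\mathcal{C}^n}$. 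This is precisely what makes the right-hand side $\mu_X(c^{-1}B_Z)$ meaningful.

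With the pathwise identity $Z_0^T(\omega)=c(X_0^T(\omega))$ in hand --- immediate from $Z_t=CX_t$ holding for every $t\in[0,T]$ --- I would then chain the definitions:
\begin{align*}
\mu_Z(B_Z) &= P\{\omega: Z_0^T(\omega)\in B_Z\} \\
&= P\{\omega: c(X_0^T(\omega))\in B_Z\} \\
&= P\{\omega: X_0^T(\omega)\in c^{-1}(B_Z)\} \\
&= \mu_X(c^{-1}B_Z),
\end{align*}
where the third equality is nothing more than the definition of the preimage $c^{-1}(B_Z)$. I do not anticipate a genuine obstacle here; the only step that warrants attention is the measurability of $c$, so that $c^{-1}(B_Z)$ is a legitimate element of $\mathcal{B}_{\mathcal{C}^n}$ and every probability written above refers to a bona fide event. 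That point is dispatched by the Lipschitz-continuity observation in the previous paragraph, after which the identity is purely bookkeeping.
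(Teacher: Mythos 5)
Your chain of equalities hides the one point that makes this claim non-trivial under the paper's conventions, and that point is precisely where your argument has a gap. The paper only asserts that the It\^o process $X$ has \emph{almost surely} continuous trajectories, so there is an exceptional null event on which $X_0^T(\omega)\notin C([0,T],\mathbb{R}^n)$. For such $\omega$ the expression $c(X_0^T(\omega))$ is undefined ($c$ is a map on $\mathcal{C}^n$ only), while the pointwise product $CX_0^T(\omega)$ --- which is what actually defines $Z_0^T(\omega)$ --- still makes sense and can perfectly well lie in $B_Z$ when $C$ is singular. Concretely, take $C=0$ and $B_Z=\{0\}$: then $\{\omega: Z_0^T(\omega)\in B_Z\}=\Omega$, whereas $\{\omega: X_0^T(\omega)\in c^{-1}(B_Z)\}=\{\omega: X_0^T(\omega)\in\mathcal{C}^n\}$, and these two events differ by the null set of discontinuous trajectories. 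So your asserted pathwise identity ``$Z_0^T(\omega)=c(X_0^T(\omega))$ for each $\omega$,'' and hence the third equality in your display, is false as a set identity; the events agree only up to a $P$-null set, and showing that the probabilities nevertheless coincide is the entire content of the claim. This is exactly what the paper's proof is organized around: it first records the easy inclusion $\{X_0^T\in c^{-1}(B_Z)\}\subseteq\{CX_0^T\in B_Z\}$, giving $\mu_X(c^{-1}(B_Z))\le\mu_Z(B_Z)$, and then for the reverse inequality splits $\{CX_0^T\in B_Z\}$ according to whether $X_0^T\in C([0,T],\mathbb{R}^n)$, discarding the discontinuous piece using a.s.\ continuity of It\^o paths.

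The repair is short: intersect every event in your chain with the probability-one event $\{X_0^T\in\mathcal{C}^n\}$ (equivalently, fix once and for all a continuous modification of $X$, if you are willing to adopt that convention, which the paper does not); then your bookkeeping goes through verbatim. On the plus side, your verification that $c$ is Borel measurable --- Lipschitz in the sup norm, hence continuous --- is a point the paper silently assumes when it writes $c^{-1}(B_Z)\in\mathcal{B}_{\mathcal{C}^n}$, and it is worth keeping in a complete write-up.
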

\begin{proof}
Let
 $\omega$ be such that $X_0^T(\omega)\in c^{-1}(B_Z)$, then we have that
 $CX_0^T(\omega)\in B_Z$. Therefore, $\{\omega: X_0^T(\omega)\in
 c^{-1}(B_Z)\}\subseteq \{\omega: CX_0^T(\omega)\in B_Z\}$ and
 $\mu_X(c^{-1}(B_Z))\le \mu_Z(B_Z)$. 
 On the other hand,
since $X_0^T$ is an Ito process, it is
a.s. continuous~\cite{liptser2012statistics}, therefore
$P\{\omega:X_0^T(\omega)\not\in
          C([0,T],\mathbb{R}^n)\}=0$. This allows us to conclude that
\begin{align*}
&\mu_Z(B_Z) \\
&=P\{\omega: CX_0^T(\omega)\in B_Z\} \\
&=P\{\omega: CX_0^T(\omega)\in B_Z, X_0^T(\omega)\in C([0,T],\mathbb{R}^n)\} \\
&\hspace{3ex}+P\{\omega: CX_0^T(\omega)\in B_Z, X_0^T(\omega)\not\in C([0,T],\mathbb{R}^n)\}  \\
&\leq P\{\omega: CX_0^T(\omega)\in B_Z, X_0^T(\omega)\in C([0,T],\mathbb{R}^n)\} \\
&\hspace{3ex}+P\{\omega: X_0^T(\omega)\not\in C([0,T],\mathbb{R}^n)\}  \\
&=P\{\omega: CX_0^T(\omega)\in B_Z, X_0^T(\omega)\in C([0,T],\mathbb{R}^n)\} \\
&=\mu_X(c^{-1}(B_Z))
\end{align*} 
Thus the claim holds. 
\end{proof}

\begin{claim}
\label{claimintmu}
For any measurable function $f$, 
\[
\int_{\mathcal{C}^n}
f(z)\mu_Z(dz)=\int_{\mathcal{C}^n} f(Cx)\mu_X(dx). 
\]
\end{claim}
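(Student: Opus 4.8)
The plan is to establish the identity by the standard measure-theoretic approximation argument (the ``standard machine''), bootstrapping from Claim~\ref{claimmuxz}. Since the map $c:\mathcal{C}^n \to \mathcal{C}^n$, $x\mapsto Cx$, is linear and hence Borel measurable, the composition $f\circ c$ is $\mathcal{B}_{\mathcal{C}^n}$-measurable whenever $f$ is, so both integrands are legitimate and the two sides are well-defined.

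First I would treat the base case $f=\mathbf{1}_{B_Z}$ for an arbitrary $B_Z\in\mathcal{B}_{\mathcal{C}^n}$. Here the left-hand side reduces to $\int_{\mathcal{C}^n}\mathbf{1}_{B_Z}(z)\,\mu_Z(dz)=\mu_Z(B_Z)$. For the right-hand side, the key observation is the pointwise set identity $\mathbf{1}_{B_Z}(Cx)=\mathbf{1}_{c^{-1}(B_Z)}(x)$, which gives $\int_{\mathcal{C}^n}\mathbf{1}_{B_Z}(Cx)\,\mu_X(dx)=\mu_X(c^{-1}(B_Z))$. The two sides then coincide exactly by Claim~\ref{claimmuxz}.

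Next I would extend by linearity of the integral to nonnegative simple functions $f=\sum_{i} a_i \mathbf{1}_{B_i}$ with $a_i\geq 0$, for which the identity follows term by term from the base case. Then, for a general nonnegative measurable $f$, I would pick an increasing sequence of nonnegative simple functions $f_k \uparrow f$ and apply the monotone convergence theorem on both sides; since $f_k\circ c \uparrow f\circ c$ pointwise, the limit passes consistently through both integrals. Finally, for an arbitrary integrable measurable $f$, decomposing $f=f^+-f^-$ and applying the nonnegative case to each part completes the argument.

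I do not anticipate a genuine obstacle here: the statement is just the change-of-variables (pushforward) formula specialized to the law of $Z=CX$, and Claim~\ref{claimmuxz} has already done the only substantive work by identifying $\mu_Z$ with the image measure $\mu_X\circ c^{-1}$. The single point that warrants care is the measurability of $f\circ c$, which is handled by the continuity (hence Borel measurability) of the linear map $c$; everything else is routine bookkeeping within the standard machine.
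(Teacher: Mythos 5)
Your proof is correct and follows essentially the same route as the paper: both derive the identity from Claim~\ref{claimmuxz} by viewing $\mu_Z$ as the pushforward measure $\mu_X\circ c^{-1}$, the paper simply invoking the resulting change-of-variables formula in one line while you spell out the standard machine (indicators, simple functions, monotone convergence, $f=f^+-f^-$) that justifies it. The only cosmetic difference is that the paper writes the intermediate integral over $c^{-1}(\mathcal{C}^n)$ and then verifies $c^{-1}(\mathcal{C}^n)=\mathcal{C}^n$, a step your pointwise identity $\mathbf{1}_{B_Z}(Cx)=\mathbf{1}_{c^{-1}(B_Z)}(x)$ renders unnecessary.
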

\vspace{2ex}
\begin{proof}
Notice that
\begin{align*}
\int_{\mathcal{C}^n} f(z)\mu_Z(dz)&=\int_{c^{-1}(\mathcal{C}^n)} f(Cx)\mu_X(dx) \\
&=\int_{\mathcal{C}^n} f(Cx)\mu_X(dx)
\end{align*}
The first equality is a consequence of Claim~\ref{claimmuxz}. 
The second equality holds since $c^{-1}(\mathcal{C}^n)=\mathcal{C}^n$. To see this, notice by definition
$
c^{-1}(\mathcal{C}^n)\triangleq \{x\in\mathcal{C}^n: Cx \in \mathcal{C}^n\} \subseteq \mathcal{C}^n$.
Conversely, $\mathcal{C}^n \subseteq c^{-1}(\mathcal{C}^n)$ since $Cx$ is continuous for any continuous function $x$.
\end{proof}

Now we prove equation \eqref{eqIXYZ}.

\begin{lemma}\label{DPI}
$
I(Y_0^T;Z_0^T)= I(Y_0^T;X_0^T)$.
\end{lemma}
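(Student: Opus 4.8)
The plan is to prove equality by comparing the two information densities (Radon--Nikodym derivatives) that define $I(Y_0^T;Z_0^T)$ and $I(Y_0^T;X_0^T)$, and then converting one integral into the other by means of the change-of-variables identity furnished by Claims~\ref{claimmuxz} and~\ref{claimintmu}. Conceptually, the equality reflects a two-sided Markov structure: on the one hand $Z_0^T=CX_0^T$ is a deterministic functional of $X_0^T$, so replacing $X$ by $Z$ loses no information about $Y$; on the other hand the channel \eqref{eqobs} produces $Y$ from $Z$ corrupted only by the noise $V$, which is independent of $X$ (since $X$ is driven solely by $W$), so conditioned on $Z$ the output $Y$ carries no additional information about $X$. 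I would make this rigorous through densities rather than through conditional mutual information, so as to reuse the Girsanov computation of Appendix A directly.

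Concretely, the analytic heart of the argument is the factorization
\[
\frac{d\mu_{YX}}{d(\mu_Y\otimes\mu_X)}(y,x)=\frac{d\mu_{YZ}}{d(\mu_Y\otimes\mu_Z)}(y,Cx).
\]
To establish it I would disintegrate $\mu_{YX}$ into its conditional laws $\mu_{Y\mid X=x}$. By the Girsanov identity \eqref{eqmuyzmuvz}, the conditional law of $Y$ given $X_0^T=x$ is absolutely continuous with respect to the Wiener measure $\mu_V$, with density $\exp\!\big(\int_0^T (Cx_t)^\top dy_t-\tfrac12\int_0^T\|Cx_t\|^2\,dt\big)$; crucially this depends on $x$ only through the path $Cx$. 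Hence the conditional law of $Y$ given $X_0^T=x$ coincides with the conditional law of $Y$ given $Z_0^T=Cx$, and dividing both by $d\mu_Y$ yields the displayed identity: the information density for $(Y,X)$ is the pullback along $x\mapsto Cx$ of the information density for $(Y,Z)$.

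With the factorization in hand I would extend Claim~\ref{claimintmu} from $\mu_X,\mu_Z$ to the joint measures. Since $\mu_{YZ}$ is the image of $\mu_{YX}$ under the measurable map $(y,x)\mapsto(y,Cx)$, the change-of-variables formula gives, writing $g(y,z)=\frac{d\mu_{YZ}}{d(\mu_Y\otimes\mu_Z)}(y,z)$,
\[
I(Y_0^T;X_0^T)=\int \log g(y,Cx)\,d\mu_{YX}(y,x)=\int \log g(y,z)\,d\mu_{YZ}(y,z)=I(Y_0^T;Z_0^T),
\]
which is the assertion of the lemma.

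The step I expect to be the main obstacle is the rigorous justification of the density factorization. It requires a regular disintegration of $\mu_{YX}$ on the path space $\mathcal{C}^n$ together with the verification that the conditional law $\mu_{Y\mid X=x}$ genuinely depends on $x$ only through $Cx$ --- equivalently, that passing from the $\sigma$-algebra generated by $X_0^T$ to the coarser one generated by $Z_0^T$ does not alter the conditional distribution of $Y$. This uses the independence $V\perp X$ and the explicit Girsanov density, but the measure-theoretic bookkeeping (existence of regular conditional probabilities and the null-set arguments already encountered in Claim~\ref{claimmuxz}) is where the care lies; once it is settled, the change-of-variables step is routine given Claims~\ref{claimmuxz} and~\ref{claimintmu}.
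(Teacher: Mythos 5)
Your proposal is correct and follows essentially the same route as the paper: your central factorization is equivalent to the paper's key identity (\ref{muYZ=muYX}), namely that the conditional law of $Y_0^T$ given $X_0^T=x$ depends on $x$ only through $Cx$ (resting on the independence of $V$ and $X$), and your final change-of-variables step is exactly the transfer of integrals provided by Claims~\ref{claimmuxz}--\ref{claimintmu}. The only cosmetic difference is that the paper concludes via the chain rule for relative entropy applied to the conditional kernels $\mu_{Y|X}$ and $\mu_{Y|Z}$, whereas you factor the information density pointwise and push the joint measure forward under $(y,x)\mapsto(y,Cx)$.
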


\begin{proof}
In addition to $\mu_X$, $\mu_Y$, $\mu_Z$, consider the measures
$\mu_{YX}$ and $\mu_{YZ}$  
on the product space $\mathcal{C}^n \times \mathcal{C}^n$ defined by the extensions of 
\begin{align*}
\mu_{YX}(B_Y \times B_X)&=P\{\omega: Y_0^T(\omega)\in B_Y, X_0^T(\omega)\in B_X\}\\
\mu_{YZ}(B_Y \times B_Z)&=P\{\omega: Y_0^T(\omega)\in B_Y, Z_0^T(\omega)\in B_Z\}.
\end{align*}
where $B_X,B_Y, B_Z\in
\mathcal{B}_{\mathcal{C}^n}$.
Since $\mathcal{C}^n$ is a Borel space~\cite[Definition
7.7]{bertsekas2004stochastic}, by \cite[Theorem~5.1.9 and Exercise~5.1.16]{Durrett-2010}
there exists a Borel-measurable stochastic kernel $\mu_{Y|X}$
on $\mathcal{C}^n$ given $\mathcal{C}^n$, such that 
$\mu_{Y|X}(B_Y|X_0^T(\omega))$ is a 
version of $P(\{\omega:Y_0^T(\omega)\in
B_Y\}|\mathcal{B}^{X_0^T})$; here $\mathcal{B}^{X_0^T}$ denotes the
$\sigma$-algebra of events generated by $X_0^T$. That is, the regular
conditional probability distribution given $\mathcal{B}^{X_0^T}$ exists
and   
\begin{eqnarray}
\lefteqn{\mu_{YX}(B_Y\times B_X)=P\{\omega: Y_0^T(\omega)\in B_Y, X_0^T(\omega)\in
B_X\}} &&  \nonumber \\
&=&\int_{\{\omega: X_0^T(\omega)\in B_X\}} P(Y_0^T\in
B_Y|\mathcal{B}^{X_0^T})P(d\omega) \nonumber \\
&=&\int_{B_X} P\left(\int_0^{(\cdot)}Cx_sds+V\in
B_Y\right)P(\omega: X_0^T\in dx) \nonumber \\
&=&\int_{B_X} \mu_{Y|X}(B_Y|x) \mu_X(dx).
\label{muXY}
\end{eqnarray}
The identity in the second line follows from the existence of the  regular
conditional probability distribution given $\mathcal{B}^{X_0^T}$, and the
identity in the third line is due to the change of variables. Here we have used
the notation $\int_0^{(\cdot)}Cx_sds+V$ to stress that 
we consider the entire path of the random process $F_{x,t}=\int_0^{t}Cx_s
ds+V_t$ parameterized by $x\in \mathcal{C}^n$. The last line in (\ref{muXY})
holds due to the 
uniqueness of the Radon-Nikodym derivative. This leads us to conclude,
that for almost all $x\in \mathcal{C}^n$,
\[
\mu_{Y|X}(B_Y|x)= P\left(\omega: F_{x,0}^T\in B_Y\right) \quad \forall B_Y\in\mathcal{B}_{\mathcal{C}^n}. 
\]
In a similar fashion, it follows that there exists a Borel-measurable
stochastic kernel $\mu_{Y|Z}$ on $\mathcal{C}^n$ given $\mathcal{C}^n$, such that 
$\mu_{Y|Z}(B_Y|Z_0^T(\omega))$ is a 
version of $P(\{\omega:Y_0^T(\omega)\in
B_Y\}|\mathcal{B}^{Z_0^T})$ and for almost all $z\in\mathcal{C}^n$,
\[
\mu_{Y|Z}(B_Y|z)= P\left(\omega: G_{z,0}^T\in B_Y\right)\quad \forall B_Y\in\mathcal{B}_{\mathcal{C}^n}, 
\]
where $G_{z,t}=\int_0^{t}z_s ds+V_t$. It is also clear from these
expressions that 
\begin{equation}
\mu_{Y|X}(B_Y|x)=\mu_{Y|Z}(B_Y|Cx).
\label{muYZ=muYX}
\end{equation}

We are now in  a position to complete the proof. By definition of the
mutual information, 
\begin{align*}
 I(Y_0^T;X_0^T) &= D(\mu_{YX}\|\mu_Y\otimes \mu_X),  \\
 I(Y_0^T;Z_0^T) &= D(\mu_{YZ}\|\mu_Y\otimes \mu_Z).
\end{align*}
Thus, the result follows from the chain of equalities:
\begin{subequations}
\begin{align}
&D(\mu_{YZ}\|\mu_Y\otimes \mu_Z) \nonumber \\
&=
\int_\mathcal{Z} D(\mu_{Y|Z}(\cdot|z)\|\mu_Y(\cdot))\mu_Z(dz) \label{eqchain1}
\\ 
&=
\int_\mathcal{X} D(\mu_{Y|Z}(\cdot|Cx)\|\mu_Y(\cdot))\mu_X(dx) \label{eqchain3}
\\
&=
\int_\mathcal{X} D(\mu_{Y|X}(\cdot|x)\|\mu_Y(\cdot))\mu_X(dx) \label{eqchain4}\\
&=
D(\mu_{YX}\|\mu_Y\otimes \mu_X).\label{eqchain5}
\end{align}
\end{subequations}
Equalities \eqref{eqchain1} and \eqref{eqchain5} follow from the chain rule of relative entropy \cite[Lemma 1.4.3(f)]{dupuis2011weak}. 
Equation \eqref{eqchain3} follows from Claim~\ref{claimintmu}, and 
\eqref{eqchain4} follows from (\ref{muYZ=muYX}).
\end{proof}


\bibliographystyle{ieeetran}
\bibliography{ref}

\begin{thebibliography}{10}
\providecommand{\url}[1]{#1}
\csname url@samestyle\endcsname
\providecommand{\newblock}{\relax}
\providecommand{\bibinfo}[2]{#2}
\providecommand{\BIBentrySTDinterwordspacing}{\spaceskip=0pt\relax}
\providecommand{\BIBentryALTinterwordstretchfactor}{4}
\providecommand{\BIBentryALTinterwordspacing}{\spaceskip=\fontdimen2\font plus
\BIBentryALTinterwordstretchfactor\fontdimen3\font minus
  \fontdimen4\font\relax}
\providecommand{\BIBforeignlanguage}[2]{{%
\expandafter\ifx\csname l@#1\endcsname\relax
\typeout{** WARNING: IEEEtran.bst: No hyphenation pattern has been}%
\typeout{** loaded for the language `#1'. Using the pattern for}%
\typeout{** the default language instead.}%
\else
\language=\csname l@#1\endcsname
\fi
#2}}
\providecommand{\BIBdecl}{\relax}
\BIBdecl

\bibitem{guo2005mutual}
D.~Guo, S.~Shamai, and S.~Verd{\'u}, ``Mutual information and minimum
  mean-square error in {G}aussian channels,'' \emph{IEEE Transactions on
  Information Theory}, vol.~51, no.~4, pp. 1261--1282, 2005.

\bibitem{duncan1970calculation}
T.~E. Duncan, ``On the calculation of mutual information,'' \emph{SIAM Journal
  on Applied Mathematics}, vol.~19, no.~1, pp. 215--220, 1970.

\bibitem{kadota1971mutual}
T.~Kadota, M.~Zakai, and J.~Ziv, ``Mutual information of the white {G}aussian
  channel with and without feedback,'' \emph{IEEE Transactions on Information
  theory}, vol.~17, no.~4, pp. 368--371, 1971.

\bibitem{weissman2013directed}
T.~Weissman, Y.-H. Kim, and H.~H. Permuter, ``Directed information, causal
  estimation, and communication in continuous time,'' \emph{IEEE Transactions
  on Information Theory}, vol.~59, no.~3, pp. 1271--1287, 2013.

\bibitem{guo2008mutual}
D.~Guo, S.~Shamai, and S.~Verd{\'u}, ``Mutual information and conditional mean
  estimation in {P}oisson channels,'' \emph{IEEE Transactions on Information
  Theory}, vol.~54, no.~5, pp. 1837--1849, 2008.

\bibitem{atar2012mutual}
R.~Atar and T.~Weissman, ``Mutual information, relative entropy, and estimation
  in the {P}oisson channel,'' \emph{IEEE Transactions on Information theory},
  vol.~58, no.~3, pp. 1302--1318, 2012.

\bibitem{kabanov1978capacity}
Y.~M. Kabanov, ``The capacity of a channel of the {P}oisson type,''
  \emph{Theory of Probability \& Its Applications}, vol.~23, no.~1, pp.
  143--147, 1978.

\bibitem{jiao2016mutual}
J.~Jiao, K.~Venkat, and T.~Weissman, ``Mutual information, relative entropy and
  estimation error in semi-martingale channels,'' \emph{Information Theory
  (ISIT), 2016 IEEE International Symposium on}, pp. 2794--2798, 2016.

\bibitem{palomar2006gradient}
D.~P. Palomar and S.~Verd{\'u}, ``Gradient of mutual information in linear
  vector {G}aussian channels,'' \emph{IEEE Transactions on Information Theory},
  vol.~52, no.~1, pp. 141--154, 2006.

\bibitem{derpich2012improved}
M.~S. Derpich and J.~{\O}stergaard, ``Improved upper bounds to the causal
  quadratic rate--distortion function for {G}aussian stationary sources,''
  \emph{IEEE Transactions on Information Theory}, vol.~58, no.~5, pp.
  3131--3152, 2012.

\bibitem{stavrou2016filtering}
P.~A. Stavrou, T.~Charalambous, and C.~D. Charalambous, ``Filtering with
  fidelity for time-varying {G}auss--{M}arkov processes,'' \emph{The 55th IEEE
  Conference on Decision and Control (CDC)}, pp. 5465--5470, 2016.

\bibitem{tanaka2014semidefinite}
T.~Tanaka, K.-K. Kim, P.~A. Parrilo, and S.~K. Mitter, ``Semidefinite
  programming approach to {G}aussian sequential rate--distortion trade-offs,''
  \emph{IEEE Transactions on Automatic Control (To appear)}, 2014.

\bibitem{zhou1996robust}
K.~Zhou, J.~C. Doyle, and K.~Glover, \emph{Robust and optimal control}.\hskip
  1em plus 0.5em minus 0.4em\relax Prentice hall New Jersey, 1996.

\bibitem{bitmead1991riccati}
R.~R. Bitmead and M.~Gevers, ``Riccati difference and differential equations:
  Convergence, monotonicity and stability,'' in \emph{The Riccati
  Equation}.\hskip 1em plus 0.5em minus 0.4em\relax Springer, 1991, pp.
  263--291.

\bibitem{bertsekas1995nonlinear}
D.~Bertsekas, \emph{Nonlinear Programming}.\hskip 1em plus 0.5em minus
  0.4em\relax Athena Scientific, 1995.

\bibitem{srdstationary}
T.~Tanaka, ``Semidefinite representation of sequential rate--distortion
  function for stationary {G}auss--{M}arkov processes,'' \emph{The 2015 IEEE
  Multi-Conference on Systems and Control (MSC)}, 2015.

\bibitem{liptser2012statistics}
R.~Liptser and A.~Shiryaev, \emph{Statistics of Random Processes I: General
  Theory}.\hskip 1em plus 0.5em minus 0.4em\relax Springer, 2012.

\bibitem{oksendal2003stochastic}
B.~{\O}ksendal, \emph{Stochastic differential equations}.\hskip 1em plus 0.5em
  minus 0.4em\relax Springer, 2003.

\bibitem{bertsekas2004stochastic}
D.~P. Bertsekas and S.~Shreve, \emph{Stochastic optimal control: the
  discrete-time case}, 2004.

\bibitem{Durrett-2010}
R.~Durrett, \emph{Probability: Theory and Examples}, 4th~ed.\hskip 1em plus
  0.5em minus 0.4em\relax Cambridge University press, 2010.

\bibitem{dupuis2011weak}
P.~Dupuis and R.~S. Ellis, \emph{A weak convergence approach to the theory of
  large deviations}.\hskip 1em plus 0.5em minus 0.4em\relax John Wiley \& Sons,
  2011, vol. 902.

\end{thebibliography}

\end{document}